\newtheorem{lemma}{Lemma}[section]
\newtheorem{theorem}[lemma]{Theorem}
\newtheorem{problem}[lemma]{Problem}
\newcommand{\comp}{{\rm Comp}}
\newcommand{\pol}{{\rm P}}
\newcommand{\mo}{{\rm mod}}
\newcommand{\Con}{{\rm Con}}
\newcommand{\Ker}{{\rm Ker}}
\newcommand{\GCD}{{\rm gcd}}
\newcommand{\bx}{\mathbf{x}}
\newcommand{\by}{\mathbf{y}}
\newcommand{\bo}{\mathbf{0}}
\newcommand{\Zpdva}{\mathbb{Z}_{p^2}}
\newcommand{\Zptri}{\mathbb{Z}_{p^3}}
\newcommand{\Zpnak}{\mathbb{Z}_{p^k}}
\newcommand{\Zpnakmin}{\mathbb{Z}_{p^{k-1}}}
\newcommand{\Zp}{\mathbb{Z}_p}
\newcommand{\Zm}{\mathbb{Z}_m}
\newcommand{\Zn}{\mathbb{Z}_n}
\newcommand{\Zmn}{\mathbb{Z}_m\times\mathbb{Z}_{n}}
\newcommand{\bu}{\mathbf{u}}
\newcommand{\bk}{\mathbf{k}}
\newcommand{\bc}{\mathbf{c}}
\newcommand{\ba}{\mathbf{a}}
\newcommand{\bl}{\mathbf{l}}
\newenvironment{proof}{P r o o f.}{\rule{2mm}{3mm}\vspace{5mm}}
\begin{document}
\title {Clones of Compatible Operations on Rings $\mathbb{Z}_{\MakeLowercase{p^k}}$}
\author{Miroslav Plo\v s\v cica\inst{1}\email{miroslav.ploscica@upjs.sk}
	\and Ivana Varga\inst{2} \email{ivana.varga@student.upjs.sk}
}

\institute{Institute of Mathematics, \v Saf\'arik's University, Ko\v sice, Slovakia
\and Institute of Mathematics, \v Saf\'arik's University, Ko\v sice, Slovakia}

\def\received{Received 24 June 2020; awaiting publication}

\maketitle
\begin{abstract}
We investigate the lattice $I(n)$ of clones on the ring $\Zn$ between the clone of polynomial functions
and the clone of congruence preserving functions. The crucial case is when $n$ is a prime power. 
For a prime $p$, the lattice $I(p)$ is trivial and $I(p^2)$ is known to be a $2$-element lattice. We provide a description
of $I(p^3)$. To achieve this result, we prove a reduction theorem, which says that $I(p^k)$ is isomorphic
to a certain interval in the lattice of clones on $\Zpnakmin$.
\end{abstract}
\keywords{congruence, clone, polynomial}

\section{Introduction and preliminaries}

A \emph{clone} on a set $A$ is a family of operations, which contains all projections and is closed under composition.
The family of all clones on $A$ forms a lattice.

An $n$-ary operation on an algebra $A$ is called \emph{compatible} or \emph{congruence preserving} if for $x_1,\dots, x_n,y_1\dots,y_n\in A$,
$(x_1,y_1),\dots,(x_n,y_n)\in\theta$ implies 
$(f(x_1,\dots,x_n),f(y_1,\dots,y_n))\in\theta$ for every 
$\theta\in\Con A$. It is clear that all compatible operations form a clone, denoted
by $\comp(A)$. This clone includes the clone $\pol(A)$ of all polynomial operations on $A$. Hence, $\pol(A)
\subseteq\comp(A)$. If the equality $\pol(A)=\comp(A)$ holds, then the algebra $A$ is called \emph{affine complete}.
A lot of research has been devoted to affine completeness for various types of algebras. Some survey can be find
in the monograph \cite{KP} of Kaarli and Pixley.

The algebras considered in this paper are the rings ${\mathbb Z}_n$ of integers modulo $n$. These rings are known to be
affine complete if and only if $n$ is squarefree (a product of distinct primes). This can be deduced from general
theorems about affine complete rings, see \cite{KP}. For $n$ that is not squarefree, we have $\pol(\Zn)\subsetneq\comp(\Zn)$, and a natural question is to describe the interval between $\pol(\Zn)$ and $\comp(\Zn)$. Clearly, this interval is a 
lattice, let us denote it by $I(n)$. The crucial case is when $n=p^k$, a power of a prime $p$, $k\ge 2$. (See Theorem
\ref{rozklad}.)

\begin{problem} Describe the lattice $I(p^k)$ for a prime $p$ and $k\ge 2$.\end{problem}

The answer to this problem seems not to be known for $k>2$. The case $k=2$ has been solved by Remizov
in \cite{RE}, who showed that $I(p^2)$ is a $2$-element lattice. Alternative proofs of this result can be found
in Bulatov \cite{B}, and also in our present paper. (See Theorem \ref{zpdva}.) For the case $k>2$, only
partial results are available, see \cite{RE}, \cite{GA1}, \cite{GA2}, and \cite{ME1}. Especially, it is known that
the lattice $I(p^k)$ for $k>2$ is infinite.

In the present paper we prove a reduction theorem, which says that the lattice $I(p^k)$ is isomorphic to
the interval between the clones $E_2(\Zpnakmin)$  and $\comp(\Zpnakmin)$, where $E_2(\Zpnakmin)$ is the clone on $\Zpnakmin$
generated by addition, constants and the binary operation $g(x,y)=pxy$. Notice that the clone $E_2(\Zpnakmin)$ is smaller
than $\pol(\Zpnakmin)$, but includes all polynomials of the \emph{additive group} $\Zpnakmin$. So, the
description of $I(p^k)$ depends on a description of clones on $\Zpnakmin$ containing all group polynomials.
For $k=3$ such clones have been completely described by Bulatov in \cite{B}. (See also
Meshchaninov's paper \cite{ME2}.) Relying on this paper we are able
to provide a complete description of the lattice $I(p^3)$.

Research in the clone theory connected with the modular arithmetics has a long and rich history.
 Clones of polynomials on $\Zn$ were studied in papers \cite{SA}, \cite{SZ1}, \cite{SZ2}, 
\cite{BU1}, \cite{BU2} of Salomaa, Szendrei, Bulatov, and others. From a more recent research relevant
to our topic, we would like to mention the results of Idziak (\cite{ID}),  Aichinger and Mayr
(\cite{AM}), and Mayr (\cite{MA}). These papers investigate clones containing a group operation on 
a given finite set.

The clone of all compatible functions has been studied also for other kinds of algebras. For instance, the paper
\cite{PH} describes generators of this clone for distributive lattices.

Elements of $\Zn$ will be denoted $0$, $1$, $\dots$, $n-1$. Congruences on the ring $\Zn$ are the usual
congruences modulo $d$ for every $d|n$. For vectors in $\Zn^k$ we adopt the convention that
$\bx=(x_1,\dots,x_k)$, $\bl=(l_1,\dots,l_k)$, etc.

\section{The ring $\mathbb{Z}_{\MakeLowercase{mn}}$}

Let $m,n$ be integers such that $\GCD(m,n)=1$. It is well known that the ring $\mathbb{Z}_{mn}$ is isomorphic to the product of rings $\Zmn$. Congruences of the ring $\Zmn$ are $\theta_{q,r}$ such that
$$((x,y),(u,v))\in\theta_{q,r}\ \hbox{iff}\ x\equiv u(\mo \ q)\ \hbox{and}\ y\equiv v(\mo\ r), $$ 
where $q$ is a divisor of $m$ and $r$ is a divisor of $n$. Let $I_{mn}$ be the interval of clones between 
$\pol(\Zmn)$ and $\comp(\Zmn)$. Clearly, the lattice $I_{mn}$ is isomorphic to $I(mn)$  and we show that
it is  isomorphic to $I(m)\times I(n)$.

\begin{lemma}\label{Zmn}
	Let $m,n$ be coprime numbers. Compatible operations on the ring $\Zmn$ are precisely operations of the form $$f((x_1,y_1),\dots,(x_l,y_l))=(f^{(m)}(\bx),f^{(n)}(\by)),$$
	where $f^{(m)}$ is compatible on $\Zm$ and $f^{(n)}$ is compatible on $\Zn$.
\end{lemma}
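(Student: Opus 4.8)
The plan is to prove the two inclusions separately, the nontrivial one being that every compatible operation splits as a product. For the easy direction, suppose $f^{(m)}$ is compatible on $\Zm$ and $f^{(n)}$ is compatible on $\Zn$, and let $f$ be the coordinatewise operation of the statement. To check that $f$ is compatible, I would fix an arbitrary congruence $\theta_{q,r}$ of $\Zmn$ (so $q\mid m$, $r\mid n$) and two argument tuples that are componentwise $\theta_{q,r}$-related. By the description of $\theta_{q,r}$ this says precisely that the first coordinates agree $\pmod q$ and the second agree $\pmod r$. Since $f^{(m)}$ preserves the congruence mod $q$ of $\Zm$ and $f^{(n)}$ preserves the congruence mod $r$ of $\Zn$, the two output pairs are again $\theta_{q,r}$-related, so $f$ is compatible.

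For the converse I take an arbitrary compatible $l$-ary operation $f$ on $\Zmn$ and write $f(\dots)=(f_1(\bx,\by),f_2(\bx,\by))$, where $\bx$ and $\by$ collect the first and second coordinates of the arguments. The crucial observation is that the two extreme congruences collapse one coordinate completely: $((x,y),(x,y'))\in\theta_{m,1}$ holds for \emph{all} $y,y'$, because the first coordinates agree $\pmod m$ while the condition $\pmod 1$ on the second is vacuous. Hence, applying $f$ to two tuples with identical first coordinates but arbitrary second coordinates, compatibility with $\theta_{m,1}$ forces the first output coordinates to agree $\pmod m$, i.e.\ $f_1(\bx,\by)=f_1(\bx,\by')$. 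Thus $f_1$ is independent of $\by$ and defines a map $f^{(m)}\colon\Zm^l\to\Zm$; the symmetric argument with $\theta_{1,n}$ shows $f_2$ depends only on $\by$ and yields $f^{(n)}$. This establishes the claimed product form.

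It then remains to verify that the two components are compatible on their respective rings. For $f^{(m)}$ I would take tuples $\bx,\bx'$ with $x_i\equiv x_i'\pmod q$ for some $q\mid m$; padding each entry with an arbitrary second coordinate, the resulting argument tuples are componentwise $\theta_{q,1}$-related, since the mod $1$ condition is automatic. Compatibility of $f$ gives that the first output coordinates agree $\pmod q$, that is $f^{(m)}(\bx)\equiv f^{(m)}(\bx')\pmod q$, so $f^{(m)}$ preserves every congruence of $\Zm$; the argument for $f^{(n)}$ via $\theta_{1,r}$ is symmetric.

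I do not expect a genuine obstacle, since once the congruence lattice of $\Zmn$ has been identified with the product congruences $\theta_{q,r}$, the whole argument is a matter of feeding the right congruence into the compatibility hypothesis. The one step needing care is the opening move of the converse: recognizing that $\theta_{m,1}$ relates \emph{any} two arguments sharing a first coordinate, which is exactly the separation-of-variables fact that forces $f$ into product form.
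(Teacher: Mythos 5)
Your proposal is correct and follows essentially the same route as the paper: the congruences $\theta_{m,1}$ and $\theta_{1,n}$ you use to separate the variables are exactly the kernels of the two coordinate projections that the paper's proof uses, and the verification that compatibility of $f$ is equivalent to compatibility of $f^{(m)}$ and $f^{(n)}$ via the congruences $\theta_{q,r}$ is the same in both arguments. No substantive difference.
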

\begin{proof}
	Let $f$ be an $l$-ary compatible operation on $\Zmn$. Let $p_1$ denote the projection from $\Zmn$ to $\Zm$. Then $\theta_1=\Ker(p_1)$ is a congruence on $\Zmn$. Let $p_2$ denote the projection to $\Zn$, then $\theta_2=\Ker(p_2)$ is a congruence on $\Zmn$. 
	Since $f$ preserves the congruences $\theta_1$ and $\theta_2$,we have
	$$ f((x_1,y_1),\dots,(x_l,y_l))\theta_1 f((x_1,0),\dots,(x_l,0));$$
	$$ f((x_1,y_1),\dots,(x_l,y_l))\theta_2 f((0,y_1),\dots,(0,y_l)) $$
	for arbitrary $\bx\in(\Zm)^l, \by\in(\Zn)^l$. Set $$f^{(m)}(\bx)=p_1(f((x_1,0),\dots, (x_l,0)))$$  $$f^{(n)}(\by)=p_2(f((0,y_1),\dots, (0,y_l))).$$	
	Then $$p_1(f((x_1,y_1),\dots,(x_l,y_l)))=f^{(m)}(\bx)$$ $$p_2(f((x_1,y_1),\dots,(x_l,y_l)))=f^{(n)}(\by).$$
	We proved that there exist operations $f^{(m)},f^{(n)}$, such that $f=(f^{(m)},f^{(n)})$.
	
	It is clear that $f$ preserves $\theta_{q,r}$ if and only if $f^{(m)}$ preserves the congruence modulo $q$ and 
$f^{(n)}$ preserves the congruence modulo $r$. So, $f$ is compatible if and only if $f^{(m)},f^{(n)}$ are compatible. 
\end{proof}

It is easy to see the following.

\begin{lemma}\label{skladanie}
	 For every $l$-ary operation $f$ and all $k$-ary operations $g_1,\dots,g_l$  from a clone $C\in I_{mn}$, the following holds:
	$$(f(g_1,\dots,g_l))^{(m)}=f^{(m)}(g_1^{(m)},\dots,g_l^{(m)}).$$
\end{lemma}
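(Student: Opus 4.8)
The plan is to unwind the definitions of composition in a clone and of the coordinate projections $f^{(m)}$, then verify the claimed identity coordinate-by-coordinate using Lemma \ref{Zmn}. By the previous lemma, every operation $h$ in a clone $C \in I_{mn}$ splits as $h = (h^{(m)}, h^{(n)})$, where $h^{(m)}$ acts on $\Zm$-coordinates and $h^{(n)}$ on $\Zn$-coordinates. In particular $f = (f^{(m)}, f^{(n)})$ and each $g_i = (g_i^{(m)}, g_i^{(n)})$. The whole point is that this splitting is ``diagonal'': the $\Zm$-component of a value of $h$ depends only on the $\Zm$-components of the inputs, and dually for $\Zn$. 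So I expect the proof to be a direct computation showing that applying $p_1$ after the composition is the same as composing the $\Zm$-components.

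First I would recall precisely how $f^{(m)}$ is recovered: from the proof of Lemma \ref{Zmn}, $f^{(m)}(\bx) = p_1\bigl(f((x_1,0),\dots,(x_l,0))\bigr)$, and more usefully the identity $p_1\bigl(f((x_1,y_1),\dots,(x_l,y_l))\bigr) = f^{(m)}(\bx)$ holds for \emph{all} choices of the $y$-coordinates. This last fact is the key lever: it says $p_1 \circ f$ ignores the second coordinates of its arguments and equals $f^{(m)}$ applied to the first coordinates. I would then fix an arbitrary input $\bu = ((a_1,b_1),\dots,(a_k,b_k)) \in (\Zmn)^k$ and apply $p_1$ to $f(g_1,\dots,g_l)(\bu) = f(g_1(\bu),\dots,g_l(\bu))$.

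The computation runs as follows. Applying $p_1$ to the composite and using the ``$p_1 \circ f$ ignores second coordinates'' identity gives $p_1\bigl(f(g_1(\bu),\dots,g_l(\bu))\bigr) = f^{(m)}\bigl(p_1(g_1(\bu)),\dots,p_1(g_l(\bu))\bigr)$. Now for each $i$, the same identity applied to $g_i$ yields $p_1(g_i(\bu)) = g_i^{(m)}(a_1,\dots,a_k)$, i.e. $g_i^{(m)}$ evaluated at the $\Zm$-components of $\bu$. Substituting these in produces $f^{(m)}\bigl(g_1^{(m)}(\ba),\dots,g_l^{(m)}(\ba)\bigr)$, where $\ba=(a_1,\dots,a_k)$ are exactly the $\Zm$-coordinates. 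Since by definition the left-hand side is $(f(g_1,\dots,g_l))^{(m)}(\ba)$, and $\ba$ ranges over all of $(\Zm)^k$ as $\bu$ varies, this establishes the claimed equality of operations on $\Zm$.

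I do not anticipate a genuine obstacle here — this is the routine verification flagged by the phrase ``It is easy to see the following,'' and the only care needed is bookkeeping. The one point to state cleanly is \emph{why} $p_1 \circ f$ depends only on the first coordinates of its arguments; this is not a formal triviality but a direct consequence of Lemma \ref{Zmn}, which guarantees $f = (f^{(m)}, f^{(n)})$ and hence that the first output coordinate is computed by $f^{(m)}$ on the first input coordinates alone. Once that is invoked, the rest is substitution. I would present the argument as a single chain of equalities in $\Zm$, being explicit that each use of the coordinate-decomposition is justified by the fact that $f$, $g_1,\dots,g_l$ all lie in $C \subseteq \comp(\Zmn)$ and so split by Lemma \ref{Zmn}.
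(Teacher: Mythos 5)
Your proof is correct; the paper itself gives no argument for this lemma (it is introduced with ``It is easy to see the following''), and your coordinate-wise verification via the decomposition $f=(f^{(m)},f^{(n)})$ from Lemma \ref{Zmn} is exactly the routine check the authors intend. The one point you rightly isolate --- that $p_1\circ f$ depends only on the first coordinates of the inputs, so that $p_1$ can be pushed through the composition --- is the whole content of the lemma, and your chain of equalities handles it cleanly.
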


\begin{lemma}\label{bijekcia}
	Let $m,n$ be coprime numbers.\begin{itemize}
		\item[(i)] Let $C\in I_{mn}$. Then the set $C_m=\lbrace f^{(m)}\mid f\in C\rbrace\in I(m)$
and the set $C_n=\lbrace f^{(n)}\mid f\in C\rbrace\in I(n)$.
		\item[(ii)] Let $D\in I(m)$ and $E\in I(n)$. Then 
$$D\times E=\lbrace f\mid f^{(m)}\in D, f^{(n)}\in E\rbrace\in I_{mn}.$$
		\item[(iii)] For every $C\in I_{mn}$: $C=C_m\times C_n$.
		\item[(iv)] For every $D\in I(m)$, $E\in I(n)$: $(D\times E)_m=D, (D\times E)_n=E$.
	\end{itemize}
\end{lemma}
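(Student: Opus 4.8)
The plan is to run everything through the two preceding lemmas. Lemma~\ref{Zmn} identifies the compatible operations on $\Zmn$ with the pairs $(f^{(m)},f^{(n)})$, where $f^{(m)}$ is compatible on $\Zm$ and $f^{(n)}$ is compatible on $\Zn$, and Lemma~\ref{skladanie} tells us that $f\mapsto f^{(m)}$ (and likewise $f\mapsto f^{(n)}$) commutes with composition. Parts (i) and (ii) are then pure bookkeeping. For (i), the $i$-th projection on $\Zmn$ has as its $m$-component the $i$-th projection on $\Zm$, so $C_m$ contains all projections, and closure under composition is immediate from Lemma~\ref{skladanie} together with $f(g_1,\dots,g_l)\in C$. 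The inclusion $C_m\subseteq\comp(\Zm)$ is part of Lemma~\ref{Zmn}, while $\pol(\Zm)\subseteq C_m$ follows because the generators of $\pol(\Zmn)$ (addition, multiplication, and the constants) have $m$-components equal to the corresponding generators of $\pol(\Zm)$; hence $\{q^{(m)}\mid q\in\pol(\Zmn)\}=\pol(\Zm)$, and $\pol(\Zmn)\subseteq C$ gives $\pol(\Zm)\subseteq C_m$. The case of $C_n$ is symmetric. Part (ii) is the same computation read backwards: $D\times E$ contains the projections and is closed under composition by Lemma~\ref{skladanie}, every element is compatible by Lemma~\ref{Zmn}, and $\pol(\Zmn)\subseteq D\times E$ because each $q\in\pol(\Zmn)$ satisfies $q^{(m)}\in\pol(\Zm)\subseteq D$ and $q^{(n)}\in\pol(\Zn)\subseteq E$.

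The real content is the nontrivial inclusion in (iii). The inclusion $C\subseteq C_m\times C_n$ holds by definition, so the point is $C_m\times C_n\subseteq C$: given $f$ with $f^{(m)}\in C_m$ and $f^{(n)}\in C_n$, I must produce $f$ itself from members of $C$. By definition of $C_m$ there is $g\in C$ with $g^{(m)}=f^{(m)}$, and by definition of $C_n$ there is $h\in C$ with $h^{(n)}=f^{(n)}$; I take $g,h$ of the same arity $l$ as $f$. Because $\GCD(m,n)=1$, the ring $\Zmn$ is a direct product and carries the idempotents $(1,0)$ and $(0,1)$. Set
$$F=(1,0)\cdot g+(0,1)\cdot h,$$
the operation computed pointwise in the ring. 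Multiplication by a constant and addition are polynomial operations of $\Zmn$, so they lie in $C$; as $g,h\in C$ as well, $F$ is a composition inside the clone $C$ and therefore $F\in C$. Reducing modulo $m$, where $(1,0)$ acts as $1$ and $(0,1)$ as $0$, gives $F^{(m)}=g^{(m)}=f^{(m)}$, and reducing modulo $n$ gives $F^{(n)}=h^{(n)}=f^{(n)}$. Hence $F=f$ and $f\in C$.

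This gluing step is the one place where coprimality is used essentially, and it is where I expect the only difficulty: from two operations of $C$ that each agree with $f$ in a single coordinate, the idempotents $(1,0)$ and $(0,1)$ — available precisely because $\GCD(m,n)=1$ — let me assemble a single operation of $C$ agreeing with $f$ in both. Part (iv) is then immediate: for $g\in D$ of arity $l$, pairing $g$ with any $l$-ary member of $E$ (for instance a projection, which $E$ contains) yields an element of $D\times E$ whose $m$-component is $g$, so $D\subseteq(D\times E)_m$, and the reverse inclusion is built into the definition of $D\times E$; symmetrically $(D\times E)_n=E$.
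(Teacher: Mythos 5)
Your proposal is correct and follows essentially the same route as the paper: parts (i), (ii), (iv) are the same bookkeeping via Lemmas \ref{Zmn} and \ref{skladanie}, and your gluing $F=(1,0)\cdot g+(0,1)\cdot h$ in (iii) is exactly the paper's construction, since the integers $a,b$ it obtains from the Chinese remainder theorem are precisely the idempotents $(1,0)$ and $(0,1)$ under the isomorphism $\zmn\cong\Zmn$ (the paper merely realizes multiplication by these constants as repeated addition).
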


\begin{proof}
	(i) It follows from Lemma \ref{Zmn} that the set $C_m$ is a subset of  $\comp(\Zm)$. It is closed under composition (from Lemma \ref{skladanie}) and  contains all projections, addition, multiplication and constants, because $C$ contains projections, addition, multiplication and constants. Analogously we can show that $C_n\in I(n)$.
	
	(ii)  It follows from Lemma \ref{Zmn} that the set $D\times E$ is a subset of the clone $\comp(\Zmn)$. It is closed under composition (from Lemma \ref{skladanie}) and contains all projections. Let $g^{(m)}(x_1,x_2)=x_1+x_2$ and $g^{(n)}(y_1,y_2)=y_1+y_2$. Then we have $g((x_1,y_1),(x_2,y_2))= (g^{(m)}(x_1,x_2),g^{(n)}(y_1,y_2))=(x_1+x_2,y_1+y_2)$, therefore $D\times E$ contains addition. Similarly, $D\times E$ contains multiplication and constants. 
	
	(iii) We want to prove that $C=\lbrace f\mid f^{(m)}\in C_m, f^{(n)}\in C_n\rbrace$. If $f\in C$, then 
 $f^{(m)}\in C_m$ and  $f^{(n)}\in C_n$ directly from the definition. Conversely, let $f^{(m)}\in C_m$ and $f^{(n)}\in C_n$. That means there exist functions $g,h\in C$, such that 
	$g^{(m)}=f^{(m)}$ and $h^{(n)}=f^{(n)}$. Using the Chinese remainder theorem, there exist natural 
numbers $a$, $b$ such that $a\equiv 1 (\mo\ m)$, $a\equiv 0(\mo\ n)$, $b\equiv 1 (\mo\ n)$, and $b\equiv 0(\mo\ m)$. Put $$f_1=\underbrace{g+g+\dots+g}_{\hbox{a-times}}+\underbrace{h+h+\dots+h}_{\hbox{b-times}}$$ 
Since $C$ contains the addition, we have $f_1\in C$. Moreover, 
$$f_1^{(m)}(\bx)=g^{(m)}(\bx)\cdot a+
h^{(m)}(\bx)\cdot b=g^{(m)}(\bx)=f^{(m)}(\bx)$$
 and 
$$f_1^{(n)}(\bx)=g^{(n)}(\bx)\cdot a+h^{(n)}(\bx)\cdot b=h^{(n)}(\bx)=f^{(n)}(\bx),$$
hence $f=f_1\in C$.

	(iv) Let $f \in D\times E$, then $f^{(m)}\in D$ from the definition. Let $g\in D$, then there exists an operation $f\in D\times E$, such that $f^{(m)}=g$. So, $g\in (D\times E)_m$, which proves the statement. Analogously we can show that $ (D\times E)_n=E$.
\end{proof}

Lemma \ref{bijekcia} shows that the assignments $C\mapsto (C_m, C_n)$ and $(D,E)\mapsto D\times E$
are mutually inverse bijections between $I_{mn}$ and $I(m)\times I(n)$. Clearly, they are order preserving,
so the lattices $I_{mn}$ and $I(m)\times I(n)$ are isomorphic. Using induction, we obtain the following result.
	
	\begin{theorem} Let $n=p_1^{\alpha_1}\cdot\dots\cdot p_m^{\alpha_m}$, where $p_1,\dots, p_m$ are
distinct primes. Then the lattice $I(n)$ is isomorphic to the product of the lattices $I(p_i^{\alpha_i})$,
$i=1,\dots,m$.\label{rozklad}
	\end{theorem}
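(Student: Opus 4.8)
The plan is to prove the statement by induction on the number $m$ of distinct prime factors, relying on the isomorphism $I_{mn}\cong I(m)\times I(n)$ already established for coprime $m,n$ in the discussion following Lemma \ref{bijekcia} (together with the observation there that $I_{mn}$ is isomorphic to $I(mn)$). For the base case $m=1$ we have $n=p_1^{\alpha_1}$, so $I(n)=I(p_1^{\alpha_1})$ and there is nothing to prove.

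For the inductive step I would factor $n=NM$, where $N=p_1^{\alpha_1}\cdots p_{m-1}^{\alpha_{m-1}}$ and $M=p_m^{\alpha_m}$. The point that must be verified before applying the previous result is coprimality: since $p_1,\dots,p_m$ are distinct primes, no prime divides both $N$ and $M$, so $\GCD(N,M)=1$. The coprime case then applies and gives $I(n)=I(NM)\cong I(N)\times I(M)$. The number $N$ has $m-1$ distinct prime factors, so the induction hypothesis yields $I(N)\cong\prod_{i=1}^{m-1}I(p_i^{\alpha_i})$; combining this with $I(M)=I(p_m^{\alpha_m})$ produces $I(n)\cong\prod_{i=1}^{m}I(p_i^{\alpha_i})$, as required.

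Since the reduction to coprime factors is immediate from the distinctness of the primes, there is no substantial obstacle here; the entire mathematical content of the theorem is already carried by the coprime case proved via Lemma \ref{bijekcia}. The only care needed is bookkeeping: the associativity (up to canonical isomorphism) of the direct product of lattices, which lets us identify $\left(\prod_{i=1}^{m-1}I(p_i^{\alpha_i})\right)\times I(p_m^{\alpha_m})$ with $\prod_{i=1}^{m}I(p_i^{\alpha_i})$, and the fact — noted just before the statement — that the mutually inverse bijections $C\mapsto(C_m,C_n)$ and $(D,E)\mapsto D\times E$ are order preserving, so that each isomorphism invoked is a genuine lattice isomorphism rather than a mere bijection of underlying sets.
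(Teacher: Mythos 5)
Your proposal is correct and follows exactly the paper's route: the paper proves the coprime two-factor case via Lemma \ref{bijekcia} and then states the theorem ``using induction,'' which is precisely the induction on the number of prime factors you carry out. You merely make explicit the bookkeeping (coprimality of the split, associativity of products, order-preservation) that the paper leaves implicit.
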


The above theorem can be also deduced from \cite{RE}, in a different formalism. We include the proof here
for the sake of completeness.

So, in order to describe $I(n)$ it suffices to investigate the lattices $I(p^k)$ for a prime $p$ and $k\ge 2$.

\section{ Reduction of $\mathbb{Z}_{\MakeLowercase{p^k}}$ to $\mathbb{Z}_{\MakeLowercase{p^{k-1}}}$}

Let $k\ge 2$ be a fixed integer. Let $M=\lbrace lp \mid l\in\lbrace0,\dots, p^{k-1}-1\rbrace\rbrace$ denote the set of multiples of $p$ in $\Zpnak$. 

\begin{lemma} There exists a polynomial function $G$ on the ring $\Zpnak$ satisfying
 $$G(\bx)=\left\{ \begin{array}{l@{\quad}c}
	1, \hbox{ if }\ \bx \in M^n\\
	0, \hbox{ otherwise. }\\ \end{array} \right.$$ \label{funkciag}
\end{lemma}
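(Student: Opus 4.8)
The plan is to reduce the $n$-variable problem to a single variable. For one variable I would construct a polynomial $h$ on $\Zpnak$ that takes the value $1$ on the multiples of $p$ and $0$ on the units, and then set $G(\bx)=\prod_{i=1}^n h(x_i)$. Since a product of $0$'s and $1$'s equals $1$ exactly when every factor equals $1$, such a $G$ equals $1$ precisely when every coordinate lies in $M$, that is on $M^n$, and equals $0$ otherwise. Being built from products of polynomials in single variables, $G$ is again a polynomial function on the ring, as required.

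To build $h$ I would invoke Euler's theorem. Put $e=p^{k-1}(p-1)=\varphi(p^k)$ and define $h(x)=1-x^e$. If $\GCD(x,p)=1$, then $x^e\equiv 1\ (\mo\ p^k)$ by Euler's theorem, so $h(x)=0$. If $p\mid x$, write $x=lp$; then $x^e$ is divisible by $p^e$, and since $e=p^{k-1}(p-1)\ge k$ for all $p\ge 2$ and $k\ge 2$, we obtain $x^e\equiv 0\ (\mo\ p^k)$, hence $h(x)=1$. Thus $h$ is exactly the desired one-variable indicator, and $G(\bx)=\prod_{i=1}^n\bigl(1-x_i^{p^{k-1}(p-1)}\bigr)$ is the claimed polynomial.

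The only point requiring care is the exponent inequality $p^{k-1}(p-1)\ge k$, which guarantees that the chosen power annihilates every multiple of $p$ modulo $p^k$; it follows from $p^{k-1}(p-1)\ge 2^{k-1}\ge k$ for $k\ge 1$. Everything else reduces to the two elementary case distinctions above, so I expect no serious obstacle. An equivalent but longer route would be to note that every function on the field $\Zp$ is induced by a polynomial and then lift it to $\Zpnak$, but the explicit closed form above seems shortest and makes the polynomiality of $G$ immediate.
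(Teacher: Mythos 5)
Your proof is correct, and it follows the same overall template as the paper's --- build a one-variable polynomial indicator of $M$ and take the product over the coordinates --- but the key one-variable step is done by a genuinely different argument. The paper takes $\alpha^{-1}\prod_{c\in\Zpnak\setminus M}(x-c)$: this vanishes when $x\notin M$ because one factor is $x-x$, while for $x\in M$ the factors $x-c$ run over all the units, so the product equals the constant $\alpha=\prod_{c\notin M}c$, which is invertible (and equals $-1$ for $p>2$ and $1$ for $p=2$, a Wilson-type computation). You instead take $h(x)=1-x^{\varphi(p^k)}$ and settle the two cases by Euler's theorem together with the inequality $p^{k-1}(p-1)\ge k$, which you verify correctly via $2^{k-1}\ge k$. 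Your route avoids both the evaluation of $\alpha$ and the normalization by $\alpha^{-n}$, since $h$ already takes the exact values $0$ and $1$; the paper's route is the generic ``vanishing product over the complement'' construction, whose only nontrivial input is that translation by an element of $M$ permutes the units. The two one-variable polynomials even have the same degree $\varphi(p^k)$. The only remark in your write-up that would not survive scrutiny is the closing aside about interpolating on the field $\Zp$ and lifting: a lift controls values only modulo $p$, not modulo $p^k$, so that alternative is not a proof as stated --- but you do not rely on it, and your main argument is complete.
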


\begin{proof} For every $x\in M$, we have $\displaystyle \prod_{c\in\Zpnak\setminus M}(x-c)=\displaystyle \prod_{c\in\Zpnak\setminus M}c=\alpha$, where the latter is invertible and independent of $x$. 
It is not difficult to prove that $\alpha$ is equal to $(-1)$ for $p>2$ and $1$ for $p=2$. 
Then the polynomial
$$G(\bx)=\alpha^{-n}{\displaystyle \prod_{i=1}^{n} \displaystyle \prod_{c\in\Zpnak\setminus M}(x_i-c)}$$ 
has the required properties. 
\end{proof}

Let $L=\{0,1,\dots,p-1\}\subseteq\Zpnak$. For every 
$\bx\in\Zpnak^n$ there exists a unique $\bc\in L^n$
such that $\bx-\bc\in M^n$. Thus,
Lemma \ref{funkciag} enables the following easy decomposition.

\begin{lemma}\label{decomp}
For every $n$-ary function $f$ on $\Zpnak$, 
    $$f(\bx)=\sum_{\bc\in L^n}f(\bx)G(\bx-\bc).$$
\end{lemma}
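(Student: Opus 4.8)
The plan is to reduce the whole identity to a single observation: the family of functions $\{G(\bx-\bc)\mid\bc\in L^n\}$ is a \emph{partition of unity} on $\Zpnak^n$, in the sense that $\sum_{\bc\in L^n}G(\bx-\bc)=1$ for every $\bx$. The key structural feature of the statement is that the factor appearing inside the sum is $f(\bx)$ itself, not $f$ evaluated at some $\bc$-dependent point; hence $f$ plays no essential role, and once the partition-of-unity property is in hand the identity follows simply by pulling $f(\bx)$ out of the sum as a constant in the summation variable $\bc$.

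First I would fix an arbitrary $\bx\in\Zpnak^n$ and invoke the unique decomposition recorded just before the statement: there is exactly one $\bc_0\in L^n$ with $\bx-\bc_0\in M^n$. (Coordinatewise this is nothing but the fact that each $x_i$ splits uniquely as a residue in $\{0,\dots,p-1\}$ plus a multiple of $p$.) For this particular $\bc_0$ the defining property of $G$ from Lemma \ref{funkciag} gives $G(\bx-\bc_0)=1$. For every other index $\bc\in L^n$ with $\bc\neq\bc_0$, the uniqueness of the decomposition forces $\bx-\bc\notin M^n$, so $G(\bx-\bc)=0$. Summing over $\bc$ therefore leaves a single nonzero term, and $\sum_{\bc\in L^n}G(\bx-\bc)=1$ for every $\bx$.

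It then remains only to factor out the $\bc$-independent quantity $f(\bx)$:
$$\sum_{\bc\in L^n}f(\bx)G(\bx-\bc)=f(\bx)\sum_{\bc\in L^n}G(\bx-\bc)=f(\bx),$$
which is the asserted decomposition. I do not expect any genuine obstacle in this argument: the entire content is the partition-of-unity property, and that in turn rests solely on the uniqueness of the representation $\bx=\bc+\bm$ with $\bc\in L^n$ and $\bm\in M^n$, already granted in the text preceding the lemma. The only point meriting a word of care is the verification that distinct $\bc$ really do yield $G(\bx-\bc)=0$, but this is immediate from that same uniqueness.
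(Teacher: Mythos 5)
Your proof is correct and is exactly the argument the paper intends: the text immediately preceding the lemma records the unique decomposition $\bx=\bc_0+\bm$ with $\bc_0\in L^n$, $\bm\in M^n$, and the paper leaves the lemma without a written proof precisely because the sum collapses to the single term $\bc=\bc_0$ where $G(\bx-\bc_0)=1$. Your partition-of-unity framing is a clean way of saying the same thing.
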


 Let $\pol(M)$ denote the clone on $M$ generated by addition and multiplication modulo $p^k$ and constant operations. Let $\comp(M)$ denote the clone that consists of all operations that preserve congruences modulo $p^2,\dots, p^{k-1}$. It is obvious that $\pol(M)\subseteq \comp(M)$. 

Now we are going to prove that the interval in the lattice of clones between $\pol(M)$ and $\comp(M)$
is isomorphic to $I(p^k)$. 

We say that $f$ preserves $M$, if $f(\bx)\in M$ whenever $\bx\in M^n$ .
For any clone $C\in I(p^k)$ (that is, for every clone between $\pol(\Zpnak)$ and $\comp(\Zpnak)$) we define  
$$C_M=\lbrace f \restriction M \mid f\in C, f \hbox{ preserves } M\rbrace.$$
 We show that assignment $C \mapsto C_M$ is the required isomorphism.
	
\begin{lemma}
For every clone $C\in I(p^k)$, the set $C_M$ is a clone between $\pol(M)$ and $\comp(M)$.
\end{lemma}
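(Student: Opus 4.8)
The plan is to verify the three requirements separately: that $C_M$ is a clone, that $\pol(M)\subseteq C_M$, and that $C_M\subseteq\comp(M)$. Everything rests on the elementary fact that $M=p\Zpnak$ is an ideal of $\Zpnak$, hence closed under addition and under multiplication modulo $p^k$ (a product of two multiples of $p$ is divisible by $p^2$). I would begin by recording this, together with its immediate consequence that the property ``$f$ preserves $M$'' is stable under composition: if $f,g_1,\dots,g_l$ all preserve $M$ and $\bx\in M^n$, then each $g_i(\bx)\in M$, and so $f(g_1(\bx),\dots,g_l(\bx))\in M$.

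To see that $C_M$ is a clone, I would note that the projections lie in $\pol(\Zpnak)\subseteq C$ and trivially preserve $M$, so their restrictions -- the projections on $M$ -- belong to $C_M$. For closure under composition, given $f\restriction M, g_1\restriction M,\dots,g_l\restriction M\in C_M$ with $f,g_1,\dots,g_l\in C$ preserving $M$, the composite $f(g_1,\dots,g_l)$ again lies in $C$ and preserves $M$ by the observation above; evaluating on $M^n$ yields the identity $(f(g_1,\dots,g_l))\restriction M=(f\restriction M)(g_1\restriction M,\dots,g_l\restriction M)$, so the composite restriction is again in $C_M$.

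For $\pol(M)\subseteq C_M$, I would observe that addition, multiplication modulo $p^k$, and each constant with value in $M$ all belong to $\pol(\Zpnak)\subseteq C$ and preserve $M$; their restrictions to $M$ are precisely the generators of $\pol(M)$, which therefore lie in the clone $C_M$. For $C_M\subseteq\comp(M)$, I would take $h=f\restriction M$ with $f\in C\subseteq\comp(\Zpnak)$ preserving $M$: since $f$ preserves the congruence modulo $p^j$ on $\Zpnak$ for each $j\in\{2,\dots,k-1\}$ and maps $M^n$ into $M$, the restriction $h$ preserves that same congruence on $M$, and hence $h\in\comp(M)$.

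The argument is routine throughout; the one point needing care is the stability of ``preserves $M$'' under composition, since this is exactly what guarantees both that $C_M$ is closed under composition and that restriction commutes with composition. All remaining verifications reduce to $M$ being an ideal and to $C$ lying between $\pol(\Zpnak)$ and $\comp(\Zpnak)$.
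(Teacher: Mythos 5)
Your proposal is correct and follows essentially the same route as the paper, which simply declares the clone property ``clear'' and notes that restrictions of compatible operations lie in $\comp(M)$ while addition and multiplication restrict to those on $M$; you merely spell out the details (in particular the stability of ``preserves $M$'' under composition) that the paper leaves implicit.
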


\begin{proof}
It is clear that $C_M$ is a clone. The rest follows from the fact, that the restriction of every $f\in\comp(\Zpnak)$
belongs to $\comp(M)$, and the addition and multiplication on $\Zpnak$ restrict to the addition and multiplication
on $M$.
\end{proof}

Conversely, for every clone $K$ on $M$ between $\pol(M)$ and $\comp(M)$ we define 
$$C(K)=\lbrace  f\in \comp(\Zpnak)\mid  \forall \ba\in \Zpnak^n : (f(\bx+\ba)-f(\ba))\restriction M\in K\rbrace.$$
Notice that the compatible operation  $f$ preserves the congruence $\mo\ p$, which implies that the operation
$f(\bx+\ba)-f(\ba)$ preserves $M$. We obtain that the restriction $(f(\bx+\ba)-f(\ba))\restriction M$ is in $\comp(M)$.

\begin{lemma}
For every clone $K$ between $\pol(M)$ and $\comp(M)$, the set  $C(K)$ is a clone in $I(p^k)$.
\end{lemma}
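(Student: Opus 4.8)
The claim is that for every clone $K$ between $\pol(M)$ and $\comp(M)$, the set $C(K)$ is a clone in $I(p^k)$. This means I must verify three things: (1) $C(K)$ is a clone (contains projections and is closed under composition); (2) $C(K) \supseteq \pol(\Zpnak)$; and (3) $C(K) \subseteq \comp(\Zpnak)$. The last containment is immediate from the definition, since every $f \in C(K)$ is required to lie in $\comp(\Zpnak)$ from the outset. So the real work is establishing that $C(K)$ is closed under composition and contains the polynomial clone.

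**The plan.** First I would verify membership of projections and of the generators of $\pol(\Zpnak)$. For a projection $\pi_i$, the difference $f(\bx+\ba) - f(\ba) = \pi_i(\bx)$ restricts on $M$ to a projection, which lies in $\pol(M) \subseteq K$; constants give the zero function, also in $K$. For addition and multiplication, I would compute the relevant difference explicitly: for $f(\bx) = x_1 + x_2$ the difference is again $x_1 + x_2$, and for $f(\bx) = x_1 x_2$ one gets $(x_1+a_1)(x_2+a_2) - a_1 a_2 = x_1 x_2 + a_2 x_1 + a_1 x_2$, whose restriction to $M$ is a polynomial on $M$ (note $a_1, a_2$ are fixed, so these are constants), hence in $\pol(M) \subseteq K$. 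Since $K$ contains the group polynomials and is closed under composition, it is plausible that once generators and closure are handled, all of $\pol(\Zpnak) \subseteq C(K)$ follows — but the cleanest route is to get closure under composition first and then observe the generators lie in $C(K)$.

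**The main obstacle: closure under composition.** The hard part is showing that if $f, g_1, \dots, g_l \in C(K)$ then $h = f(g_1,\dots,g_l) \in C(K)$; that is, for every $\ba$, the restriction $(h(\bx+\ba) - h(\ba))\restriction M$ lies in $K$. The difficulty is that the ``difference'' operator does not compose transparently: $h(\bx+\ba) - h(\ba)$ is $f(g_1(\bx+\ba),\dots,g_l(\bx+\ba)) - f(g_1(\ba),\dots,g_l(\ba))$, and to express this in terms of the known differences of $f$ and of the $g_i$ I must insert the shift $\be_i = g_i(\ba)$ and rewrite $g_i(\bx+\ba) = \be_i + (g_i(\bx+\ba) - \be_i)$. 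The term $u_i(\bx) := (g_i(\bx+\ba)-g_i(\ba))\restriction M$ lies in $K$ by hypothesis on $g_i$, and restricting to $\bx \in M^n$ each $u_i(\bx) \in M$ since the $g_i$ preserve the congruence mod $p$. Then I would write the difference of $h$ as $\big(f(\be_1 + u_1(\bx),\dots,\be_l + u_l(\bx)) - f(\be_1,\dots,\be_l)\big)$, which is precisely the $f$-difference at the anchor point $\be = (\be_1,\dots,\be_l)$ evaluated at the argument $(u_1(\bx),\dots,u_l(\bx))$. By the hypothesis on $f$, the map $\bv \mapsto (f(\bv+\be)-f(\be))\restriction M$ is in $K$, and composing it with the $u_i \in K$ inside the clone $K$ yields an element of $K$. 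The care needed is purely bookkeeping: verifying that each intermediate quantity genuinely restricts to a function $M \to M$ so that composition inside $K$ is legitimate, which follows from the fact that every $g_i$ and $f$ preserve the congruence modulo $p$ and hence send multiples of $p$ to multiples of $p$.

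**Expected difficulty.** I expect the composition argument to be the only genuine obstacle; everything else is routine checking of generators. The subtlety is entirely in correctly decomposing the composite's difference so that it factors as a $K$-composition of $K$-differences, and in confirming that $M$-preservation holds at each step so that the restrictions compose within the clone $K$ on the set $M$.
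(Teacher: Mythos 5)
Your proposal is correct and follows essentially the same route as the paper: the generators (projections, constants, addition, multiplication) are checked by direct computation of the shifted difference, and closure under composition is handled by exactly the paper's decomposition, with your $\bv\mapsto(f(\bv+\be)-f(\be))\restriction M$ and $u_i$ being the paper's $h$ and $h_i$. The only cosmetic point is in the multiplication step: the term $a_2x_1$ restricted to $M$ is not a ``constant'' but a scalar multiple, and since $a_2$ need not lie in $M$ one should (as the paper notes) realize it as the sum of $a_2$ copies of $x_1$ to see it lies in $\pol(M)$.
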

\begin{proof}
1. For every $n$-ary projection $f(\bx)=x_i$ on $\Zpnak$ and every $\ba\in \Zpnak^n$, the operation
$f(\bx+\ba)-f(\ba)=x_i+a_i-a_i=x_i$ is a projection on $M$ and hence belongs to $K$. Therefore,
$C(K)$ contains all projections.

2. To show that $C(K)$ is closed under composition, consider operations $f$, $g_1,\dots,g_n\in C(K)$, with $f$ $n$-ary and
all  $g_i$  $m$-ary.  Let $\ba\in\Zpnak^m$. Then $(g_1(\ba),\dots,g_n(\ba))\in\Zpnak^n$. Since $f\in C(K)$,
the operation
$$h(\bx)=(f(\bx+(g_1(\ba),\dots,g_n(\ba))-f(g_1(\ba),\dots,g_n(\ba)))\restriction M$$
belongs to $K$. Further, for every $i$, the operation 
$$h_i(\bx)=(g_i(\bx+\ba)-g_i(\ba))\restriction M$$
belongs to $K$ because $g_i\in C(K)$. Since $K$ is closed under composition, we have $h(h_1,\dots,h_n)(\bx)\in K$. For every $\bx\in M^m$ we have  $$h(h_1,\dots,h_n)(\bx)=h(h_1(\bx),\dots,h_n(\bx))=$$ $$h(g_1(\bx+\ba)-g_1(\ba),\dots,g_n(\bx+\ba)-g_n(\ba))=$$$$ f(g_1(\bx+\ba),\dots,g_n(\bx+\ba))-f(g_1(\ba),\dots,g_n(\ba))$$ Therefore $f(g_1,\dots,g_n)(\bx)$ belongs to $C(K)$.

3. It is easy to check that $C(K)$ contains the addition.

4. Let $f(x,y)=x\cdot y$, $a_1,a_2\in\Zpnak$. Then $f(x+a_1,y+a_2)-f(a_1,a_2)=(x+ a_1)\cdot(y+ a_2)-(a_1\cdot a_2)=x\cdot y+x\cdot a_2+y\cdot a_1$. The restriction of this function to $M$ belongs to $K$, because it
is a polynomial. (Notice that $x\cdot a_1$ can be replaced by the sum of $a_1$ copies of $x$, and similarly for $y\cdot a_2$.) Therefore $f\in C(K)$.

5. Every constant operation $f(\bx)=c$ belongs to the clone $C(K)$ because $f(\bx+\ba)-f(\ba)=0$, and the restriction
of this function belongs to $K$.

6. The inclusion $C(K)\subseteq\comp(\Zpnak)$ follows directly from the definition.
\end{proof}

\begin{lemma} 	$K=C(K)_M$ for every clone $K$ between the clones $\pol(M)$ and $\comp(M)$.\label{surj}
\end{lemma}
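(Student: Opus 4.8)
The plan is to establish the two inclusions $C(K)_M\subseteq K$ and $K\subseteq C(K)_M$ separately. For the first, take any $g\in C(K)$ that preserves $M$. Applying the defining condition of $C(K)$ with $\ba=\bo$ gives $(g(\bx)-g(\bo))\restriction M\in K$. Since $g$ preserves $M$ and $\bo\in M^n$, the value $g(\bo)$ lies in $M$, so the corresponding constant operation is in $\pol(M)\subseteq K$. As $g\restriction M=(g(\bx)-g(\bo))\restriction M+g(\bo)$ and $K$ contains the addition and is closed under composition, we conclude $g\restriction M\in K$. This inclusion is routine.

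For the substantial inclusion $K\subseteq C(K)_M$, I would extend a given $h\in K$ to a compatible operation on $\Zpnak$. For $\bx\in\Zpnak^n$ let $r(\bx)\in M^n$ be the unique element with $\bx-r(\bx)\in L^n$ (the decomposition recorded before Lemma \ref{decomp}), and set $g=h\circ r$. Since $r$ fixes $M^n$ pointwise, $g$ preserves $M$ and $g\restriction M=h$. First I would check $g\in\comp(\Zpnak)$. The map $r$ is compatible, because $\bx\equiv\by\pmod{p^j}$ with $j\ge 1$ forces $\bx$ and $\by$ to have equal residues modulo $p$, whence $r(\bx)-r(\by)=\bx-\by$ is divisible by $p^j$. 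Applying $h$ afterwards, the congruences modulo $p^2,\dots,p^{k-1}$ are preserved because $h\in\comp(M)$, the congruence modulo $p$ is preserved because $h$ takes values in $M$, and the congruence modulo $p^k$ is the identity on $\Zpnak$; hence $g$ is compatible.

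The crux is verifying the membership condition $(g(\bx+\ba)-g(\ba))\restriction M\in K$ for every $\ba\in\Zpnak^n$. The key identity is $r(\bx+\ba)=\bx+r(\ba)$ for $\bx\in M^n$, valid because adding an element of $M^n$ leaves residues modulo $p$ unchanged. Writing $\bd=r(\ba)\in M^n$, the restricted operation becomes $\bx\mapsto h(\bx+\bd)-h(\bd)$ on $M^n$. Each inner translation $x_i\mapsto x_i+d_i$ is a group polynomial on $M$, hence in $\pol(M)\subseteq K$, so $h(\bx+\bd)\in K$ by closure under composition, and adding the constant $-h(\bd)\in M$ keeps us in $K$. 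Thus $g\in C(K)$ and $h=g\restriction M\in C(K)_M$. I expect this reconciliation of the definitions of $C(K)$ and of $K$ through the translation identity for $r$ to be the main obstacle, while the compatibility of $g$ and the reverse inclusion are comparatively straightforward.
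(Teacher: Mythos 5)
Your proof is correct, and it follows the same two-inclusion skeleton as the paper (including the identical argument for $C(K)_M\subseteq K$ via $\ba=\bo$), but the hard inclusion uses a genuinely different extension of $h$. The paper extends $h$ by zero outside $M^n$: it sets $f(\bx)=h(\bx)$ for $\bx\in M^n$ and $f(\bx)=0$ otherwise, and then splits the verification of $(f(\bx+\ba)-f(\ba))\restriction M\in K$ into the case $\ba\in M^n$, where the restriction equals $h(\bx+\ba)-h(\ba)$, and the case $\ba\notin M^n$, where it is identically zero. You instead extend via the retraction $r$ onto $M^n$, setting $g=h\circ r$, which replaces the case split by the single translation identity $r(\bx+\ba)=\bx+r(\ba)$ for $\bx\in M^n$ and yields the uniform formula $h(\bx+\bd)-h(\bd)$ with $\bd=r(\ba)$. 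The trade-off is small: the zero extension makes half the cases trivial but still needs, for $\ba\in M^n$, the observation that translations by elements of $M$ lie in $\pol(M)\subseteq K$ (which the paper leaves implicit and you spell out); your retraction extension needs the same observation once, plus the (easy) compatibility of $r$. Both extensions preserve $M$, restrict to $h$, and are compatible for the same reasons, so the two arguments are of essentially equal length and both are complete.
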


\begin{proof}
Let $h\in K$ be an $n$-ary operation on $M$. Define 
$$f(\bx)=\left\{ \begin{array}{l@{\quad}c}
h(\bx), \hbox{ if }\ \bx \in M^n\\
0, \hbox{ otherwise. }\\ \end{array} \right.$$ 
Then $f$ preserves $M$ and $h=f\restriction M$. We claim that $f\in C(K)$. The operation $f$ is compatible because $h$ is compatible. We need to show that $(f(\bx+\ba)-f(\ba))\restriction M\in K$ holds for every $\ba\in \Zpnak^n$ . For $\ba\in M^n$ we have $(f(\bx+\ba)-f(\ba))\restriction M=h(\bx+\ba)-h(\ba)\in K$ and for $\ba\notin M^n$ we have $(f(\bx+\ba)-f(\ba))\restriction M=0\in K$. This proves that $K\subseteq C(K)_M$.

Conversely, let $h\in C(K)_M$, which means that there exists $f\in C(K)$, such that $h=f\restriction M$ and $f$ preserves $M$. Using the definition of $C(K)$ with  $\ba=\bo $ we obtain that $(f(\bx+\bo)-f(\bo))\restriction M\in K$.
Since $f(\bo)\in M$, and $K$ contains constants and the addition, we have $h(\bx)=f(\bx)\restriction M\in K$. 
This proves that  $C(K)_M\subseteq K$.
\end{proof}

\begin{lemma}\label{order}
Let $C,D\in I(p^k)$. Then  $C\subseteq D$ if and only if $C_M\subseteq D_M$.
\end{lemma}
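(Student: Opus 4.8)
The forward implication is immediate from the definitions: if $C\subseteq D$ and $g\in C_M$, pick $f\in C$ preserving $M$ with $g=f\restriction M$; then $f\in D$ and $f$ still preserves $M$, so $g\in D_M$. The substance of the lemma is the converse, and the plan is to factor it through the operator $C(\cdot)$ by establishing, whenever $C_M\subseteq D_M$, the chain of inclusions $C\subseteq C(C_M)\subseteq C(D_M)\subseteq D$.

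For the first inclusion I would show $C\subseteq C(C_M)$ for every $C\in I(p^k)$. Given an $n$-ary $f\in C$ and $\ba\in\Zpnak^n$, the operation $g_{\ba}(\bx)=f(\bx+\ba)-f(\ba)$ again lies in $C$, since $C$ is a clone containing $f$, the constants $\ba$ and $f(\ba)$, and addition. Because $f$ is compatible it preserves the congruence modulo $p$, so $g_{\ba}$ preserves $M$; hence $g_{\ba}\restriction M\in C_M$. As this holds for all $\ba$, and $f\in\comp(\Zpnak)$, the definition of $C(C_M)$ gives $f\in C(C_M)$. The middle inclusion $C(C_M)\subseteq C(D_M)$ is then just the monotonicity of $K\mapsto C(K)$, which is visible from its definition, as enlarging $K$ only weakens the defining membership condition.

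The crux is the last inclusion $C(D_M)\subseteq D$, for which I would use the decomposition of Lemma \ref{decomp} together with the polynomial $G$ of Lemma \ref{funkciag}. Take $h\in C(D_M)$. For each $\bc\in L^n\subseteq\Zpnak^n$ the restriction $(h(\bx+\bc)-h(\bc))\restriction M$ belongs to $D_M$, so there is an operation $\tilde h_{\bc}\in D$ preserving $M$ with $\tilde h_{\bc}\restriction M=(h(\bx+\bc)-h(\bc))\restriction M$. I would then verify that
$$h(\bx)=\sum_{\bc\in L^n}\bigl(\tilde h_{\bc}(\bx-\bc)+h(\bc)\bigr)G(\bx-\bc).$$
Indeed, for a fixed $\bx$ exactly one $\bc\in L^n$ satisfies $\bx-\bc\in M^n$, and for that $\bc$ one has $G(\bx-\bc)=1$ and $\tilde h_{\bc}(\bx-\bc)=h(\bx)-h(\bc)$, while every other summand is annihilated by $G$. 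The right-hand side is assembled from $\tilde h_{\bc}\in D$, the translations $\bx\mapsto\bx-\bc$, the constants $h(\bc)$, the polynomial $G$, and addition and multiplication, all of which lie in the clone $D$; hence $h\in D$.

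The step I expect to be the main obstacle is precisely this reconstruction. The delicate point is that each witness $\tilde h_{\bc}$ is pinned down only on $M^n$ and is otherwise an arbitrary extension inside $D$; one must check that this ambiguity is harmless, which works because the factor $G(\bx-\bc)$ forces $\tilde h_{\bc}$ to be evaluated only on the coset where $\bx-\bc\in M^n$, where its value is determined. Once these three inclusions are in place the converse follows, and, taking $D=C$, the same argument incidentally yields $C(C_M)=C$, so that $C\mapsto C_M$ and $K\mapsto C(K)$ are mutually inverse bijections, completing the correspondence begun in Lemma \ref{surj}.
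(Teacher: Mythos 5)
Your proof is correct and follows essentially the same route as the paper: the crux is the identical reconstruction $h(\bx)=\sum_{\bc\in L^n}\bigl(\tilde h_{\bc}(\bx-\bc)+h(\bc)\bigr)G(\bx-\bc)$ built from the indicator polynomial $G$ of Lemma \ref{funkciag} and the decomposition of Lemma \ref{decomp}. The only difference is organizational: you route the converse through $C\subseteq C(C_M)\subseteq C(D_M)\subseteq D$ (which incidentally also yields Lemma \ref{invers}), whereas the paper applies the same reconstruction directly to $f\in C$ using witnesses $g_{\bc}\in D$ supplied by the hypothesis $C_M\subseteq D_M$.
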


\begin{proof}
If $C\subseteq D$ then it is obvious that also  $C_M\subseteq D_M$. Conversely, suppose that $C_M\subseteq D_M$. Let $f\in C$ be an $n$-ary operation. Then, for every $\bc\in \{0,\dots,p-1\}^n$,
the operation $f(\bx+\bc)$ belongs to $C$. The $n$-ary constant function  $f(\bc)$ is also in $C$ and 
therefore  $f(\bx+\bc)-f(\bc)\in C$. The compatibility of $f$ implies that this function preserves $M$,
so the operation $(f(\bx+\bc)-f(\bc))\restriction M$
belongs to $C_M\subseteq D_M$. Consequently, there exists an operation $g_{\bc}\in D$ such that $g_{\bc}(\bx)=f(\bx+\bc)-f(\bc)$ for every $\bx\in M^n$. Using the polynomial $G$ from
Lemma \ref{funkciag}  we get 
$$g_{\bc}(\bx)G(\bx)=(f(\bx+\bc)-f(\bc))G(\bx),$$
which holds for every $\bc$ and every $\bx\in\Zpnak^n$. (It is trivial for $\bx\notin M^n$.)
After the substitution $\bx=\bu-\bc$ we obtain
$$g_{\bc}(\bu-\bc)G(\bu-\bc)=(f(\bu)-f(\bc))G(\bu-\bc)$$
for every $\bu\in\Zpnak^n$.
Now we use Lemma \ref{decomp}:
$$f(\bu)=\sum_{\bc}f(\bu)G(\bu-\bc)= \sum_{\bc}g_{\bc}(\bu-\bc)G(\bu-\bc)+\sum_{\bc}f(\bc)G(\bu-\bc)$$
and from this expression it follows that $f\in D$.
\end{proof}
	
\begin{theorem}\label{first}
The lattice $I(p^k)$  is isomorphic to the interval between $\pol(M)$ and $\comp(M)$.
\end{theorem}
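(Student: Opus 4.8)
The plan is to exhibit the assignment $\Phi\colon C\mapsto C_M$ as the desired isomorphism, with $\Psi\colon K\mapsto C(K)$ as its inverse. Since all the substantive content has already been established in the preceding lemmas, the proof amounts to assembling them into the statement that these two maps are mutually inverse order-isomorphisms.

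First I would record that the maps go where they should: by the lemma asserting that $C_M$ is a clone between $\pol(M)$ and $\comp(M)$, the map $\Phi$ sends $I(p^k)$ into the interval $[\pol(M),\comp(M)]$, and by the lemma asserting that $C(K)$ is a clone in $I(p^k)$, the map $\Psi$ sends that interval back into $I(p^k)$. So both $\Phi$ and $\Psi$ are well defined.

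Next I would show that $\Phi$ and $\Psi$ are mutually inverse bijections. Surjectivity of $\Phi$ is immediate from Lemma \ref{surj}, which gives $\Phi(\Psi(K))=C(K)_M=K$ for every $K$ in the interval. Injectivity follows from Lemma \ref{order}: if $C_M=D_M$, then $C_M\subseteq D_M$ and $D_M\subseteq C_M$ force $C\subseteq D$ and $D\subseteq C$, hence $C=D$. Thus $\Phi$ is a bijection. To see that $\Psi$ is genuinely its two-sided inverse, apply Lemma \ref{surj} with $K=C_M$ to obtain $C(C_M)_M=C_M$; injectivity of $\Phi$ then yields $\Psi(\Phi(C))=C(C_M)=C$.

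Finally, Lemma \ref{order} states precisely that $C\subseteq D$ if and only if $C_M\subseteq D_M$, so $\Phi$ is an order-embedding; combined with the surjectivity just established, $\Phi$ is an order-isomorphism between the posets $I(p^k)$ and $[\pol(M),\comp(M)]$. Because both are lattices, being intervals in lattices of clones, any order-isomorphism automatically preserves meets and joins, so $\Phi$ is a lattice isomorphism and the theorem follows. I do not expect any genuine obstacle at this stage, since the real work lies in the earlier lemmas; the only point that deserves explicit mention is that a surjective order-embedding between lattices is already a lattice isomorphism, so no separate check of compatibility with $\wedge$ and $\vee$ is required.
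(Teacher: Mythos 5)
Your proposal is correct and follows exactly the paper's own argument: Lemma \ref{order} gives that $C\mapsto C_M$ is an order embedding and Lemma \ref{surj} gives surjectivity, from which the lattice isomorphism follows. The additional remarks about $K\mapsto C(K)$ being the two-sided inverse are also what the paper records immediately afterwards (Lemma \ref{invers}).
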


\begin{proof} By Lemma \ref{order}, the assignment $C\mapsto C_M$ is an order embedding. By Lemma \ref{surj},
it is also surjective.
\end{proof}

So, the assignment $C\mapsto C_M$ is a bijection and hence has a unique inverse. According to Lemma \ref{surj},
this inverse is the assignment $K\mapsto C(K)$. Hence, we also have the following assertion.

\begin{lemma} 	$D=C(D_M)$ for every $D\in I(p^k)$.\label{invers}
\end{lemma}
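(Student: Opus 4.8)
The plan is to deduce the identity $D=C(D_M)$ purely formally from the two facts already in hand: that the assignment $\Phi\colon C\mapsto C_M$ is a bijection from $I(p^k)$ onto the interval between $\pol(M)$ and $\comp(M)$ (Theorem \ref{first}), and that $\Phi(C(K))=C(K)_M=K$ for every clone $K$ in that interval (Lemma \ref{surj}). No new computation with the operations themselves should be required; the whole content is an inverse-function argument, so I do not expect any genuine obstacle, only some bookkeeping.

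First I would fix an arbitrary $D\in I(p^k)$ and put $K=D_M$. By the lemma stating that $C_M$ is a clone between $\pol(M)$ and $\comp(M)$, this $K$ lies in the domain of the assignment $K\mapsto C(K)$; and by the lemma stating that $C(K)$ is a clone in $I(p^k)$, the clone $C(D_M)$ indeed belongs to $I(p^k)$. Thus both $D$ and $C(D_M)$ are legitimate members of $I(p^k)$, so it is meaningful to compare their images under $\Phi$.

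Next I would apply Lemma \ref{surj} to this particular $K=D_M$, which gives $C(D_M)_M=C(K)_M=K=D_M$; that is, $\Phi(C(D_M))=\Phi(D)$. Finally I would invoke the injectivity of $\Phi$, which is part of Theorem \ref{first} (an order embedding is in particular injective). Since $\Phi(C(D_M))=\Phi(D)$ and both arguments lie in $I(p^k)$, injectivity yields $C(D_M)=D$, as desired.

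The only points requiring any care are to confirm that the two objects being compared both lie in $I(p^k)$, so that Theorem \ref{first} is applicable, and to match the generic clone $K$ of Lemma \ref{surj} with the concrete clone $D_M$. Equivalently, one may phrase the argument as the standard observation that a right inverse of a bijection is automatically its two-sided inverse: Lemma \ref{surj} says $K\mapsto C(K)$ is a right inverse of $\Phi$, hence it coincides with $\Phi^{-1}$, and applying $\Phi^{-1}=C(\cdot)$ to $\Phi(D)=D_M$ returns $D$.
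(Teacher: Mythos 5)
Your proof is correct and follows the same route as the paper, which likewise derives the identity from the bijectivity of $C\mapsto C_M$ (Theorem \ref{first}) together with Lemma \ref{surj} exhibiting $K\mapsto C(K)$ as its right, hence two-sided, inverse. Your write-up merely spells out the injectivity step more explicitly than the paper does.
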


\begin{lemma} 	Let $K$ be a clone on $M$ generated by addition, multiplication, constants and operations 
$\lbrace h_i\mid i\in I\rbrace$. Then $C(K)$ is generated by addition,\linebreak multiplication, constants and operations $f_i(\bx)= \left\{ \begin{array}{l@{\quad}c}
		h_i(\bx), \hbox{ if }\ \bx \in M^n\\
		0, \hbox{ otherwise. }\\ \end{array} \right.$\label{geners}
\end{lemma}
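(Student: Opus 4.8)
The plan is to let $C'$ denote the clone generated by addition, multiplication, constants and the operations $f_i$, and to prove $C'=C(K)$ by establishing both inclusions. The guiding idea is that I never have to manipulate arbitrary terms of $C'$: since $C\mapsto C_M$ is an order embedding (Lemma \ref{order}) and $C(K)_M=K$ (Lemma \ref{surj}), it is enough to match \emph{generators} on the level of $M$-restrictions.

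First I would check the inclusion $C'\subseteq C(K)$. Each $f_i$ is the extension of the generator $h_i\in K$ by the value $0$ outside $M^n$, which is exactly the situation treated in the proof of Lemma \ref{surj}: such a zero-extension is compatible (because $h_i$ is) and satisfies $(f_i(\bx+\ba)-f_i(\ba))\restriction M\in K$ for every $\ba$, so $f_i\in C(K)$. As addition, multiplication and constants also belong to $C(K)$, and $C(K)$ is a clone, the clone $C'$ generated by all these operations satisfies $C'\subseteq C(K)$. In particular $\pol(\Zpnak)\subseteq C'\subseteq\comp(\Zpnak)$, so $C'\in I(p^k)$ and $C'_M$ is a well-defined clone.

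For the opposite inclusion I would invoke Lemma \ref{order}: to obtain $C(K)\subseteq C'$ it suffices to show $C(K)_M\subseteq C'_M$, and by Lemma \ref{surj} this amounts to $K\subseteq C'_M$. Since $C'_M$ is a clone, I only need each generator of $K$ to lie in $C'_M$. The operations addition, multiplication and constants on $M$ are the restrictions to $M$ of the corresponding $M$-preserving operations of $C'$; and each $h_i$ equals $f_i\restriction M$, where $f_i\in C'$ preserves $M$ (on $M^n$ it agrees with the $M$-valued operation $h_i$, and off $M^n$ it equals $0\in M$). Hence every generator of $K$ belongs to $C'_M$, so $K\subseteq C'_M$ and thus $C(K)\subseteq C'$. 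Combined with the first inclusion this gives $C'=C(K)$.

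The argument is short precisely because the substantial work has already been carried out: the compatibility and the $C(K)$-membership of the zero-extensions was settled in Lemma \ref{surj}, and the passage from equality of $M$-restrictions to equality of clones is the order-embedding property of Lemma \ref{order}. Consequently I expect no genuine obstacle, only a point requiring care, namely the bookkeeping of the correspondence between generators under restriction to $M$ --- in particular verifying that each $f_i$ really preserves $M$, so that $h_i=f_i\restriction M$ is a legitimate element of $C'_M$ rather than merely the restriction of some operation that fails to preserve $M$.
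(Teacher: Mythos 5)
Your proposal is correct and follows essentially the same route as the paper: both inclusions are reduced to checking generators, with $C'\subseteq C(K)$ coming from the zero-extension argument of Lemma \ref{surj}, and $C(K)\subseteq C'$ from the fact that $C'_M$ contains all generators of $K$, transported back via the order isomorphism (the paper phrases this last step as $D=C(D_M)\supseteq C(K)$ using Lemma \ref{invers}, while you use Lemma \ref{order} together with $C(K)_M=K$ --- the same content).
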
	

\begin{proof}
	Let $D$ be generated by such operations on $\Zpnak$. These generators belong to $C(K)$, therefore $D\subseteq C(K)$. It is clear, that $D_M\supseteq K$ because $D_M$ contains all generators of $K$ and that yields $D=C(D_M)\supseteq C(K)$.
\end{proof}

As the second step in our reduction from $\Zpnak$ to $\Zpnakmin$ we now show that the interval between
$\pol(M)$ and $\comp(M)$ is isomorphic to a certain interval in the lattice of clones on $\Zpnakmin$.
The key is in the following construction.

Let $f:(\Zpnakmin)^n\rightarrow\Zpnakmin$ be an $n$-ary operation and define the operation $f^*:M^n\rightarrow M$ for every $l_i\in \Zpnakmin$ as follows
	$$f^*(l_1p,\dots,l_np)=f(l_1,\dots,l_n)\cdot p.$$ 
Recall that we identify $\Zpnakmin$ as the set $\{0,1,\dots,
p^{k-1}-1\}$. The numbers $l_i$ on the left hand side of the above equation are treated as elements of $\Zpnak$.	
This definition is correct, as every element of $M$ is equal to $lp$ for some $l\in\Zpnakmin$. It is easy to see
the following assertion.

\begin{lemma} An operation $f:(\Zpnakmin)^n\rightarrow\Zpnakmin$ belongs to $\comp(\Zpnakmin)$ if
and only if $f^*\in\comp(M)$.\label{compat}
\end{lemma}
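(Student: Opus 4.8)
The plan is to realize $f^*$ as the conjugate of $f$ by the natural bijection between $\Zpnakmin$ and $M$, and then to track how this bijection translates the relevant congruences. First I would introduce the map $\varphi:\Zpnakmin\rightarrow M$ defined by $\varphi(l)=lp$; since every element of $M$ is uniquely of the form $lp$ with $l\in\{0,\dots,p^{k-1}-1\}$, this is a bijection. By the very definition of $f^*$ we have $f^*(\varphi(l_1),\dots,\varphi(l_n))=\varphi(f(l_1,\dots,l_n))$, so $f^*$ is obtained from $f$ by conjugating with $\varphi$ (applied coordinatewise on the inputs).

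The key observation is the correspondence between congruences. For $u=lp$ and $v=l'p$ in $M$ and any $j\ge 1$, we have $u\equiv v\ (\mo\ p^{j+1})$ if and only if $p^{j+1}\mid (l-l')p$, that is, if and only if $l\equiv l'\ (\mo\ p^j)$. Thus $\varphi$ carries the congruence modulo $p^j$ on $\Zpnakmin$ to the congruence modulo $p^{j+1}$ on $M$.

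With this dictionary the two compatibility conditions match term by term. Writing the inputs as $u_i=l_ip$, $v_i=l_i'p$ and using $f^*(\bu)=\varphi(f(\bl))$ and $f^*(\bv)=\varphi(f(\bl'))$, the implication
$$l_i\equiv l_i'\ (\mo\ p^j)\ \hbox{ for all }i\ \Longrightarrow\ f(\bl)\equiv f(\bl')\ (\mo\ p^j)$$
holds if and only if the corresponding implication for $f^*$ modulo $p^{j+1}$ holds. I would then let $j$ range over $1,\dots,k-2$: on the $\Zpnakmin$ side these are exactly the nontrivial congruences (the congruences modulo $1$ and modulo $p^{k-1}$ being the full relation and the identity, hence preserved by every operation), so their preservation is equivalent to $f\in\comp(\Zpnakmin)$; on the $M$ side the images are precisely the congruences modulo $p^2,\dots,p^{k-1}$, whose preservation is by definition equivalent to $f^*\in\comp(M)$. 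Combining these equivalences yields the stated biconditional.

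Since the argument is a direct translation through a bijection, there is no serious obstacle. The only point requiring care is the bookkeeping of the index shift $j\leftrightarrow j+1$ between the moduli, together with the observation that the extreme congruences (the identity and the full relation) are automatically preserved on both sides and may therefore be ignored when matching $\comp(\Zpnakmin)$ with $\comp(M)$; this last remark is exactly why the definition of $\comp(M)$ begins at the modulus $p^2$ rather than $p$.
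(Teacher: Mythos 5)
Your proof is correct: the conjugation of $f$ by the bijection $l\mapsto lp$, together with the observation that this bijection carries the congruence modulo $p^j$ on $\Zpnakmin$ to the congruence modulo $p^{j+1}$ on $M$ (with the trivial congruences on both sides ignored), is exactly the intended argument. The paper states this lemma without proof, remarking only that it is easy to see, so your write-up simply supplies the details the authors leave to the reader.
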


\begin{lemma}\label{compo}
	Let $m,n\in \mathbb{N}$. For every $n$-ary operation $f$ and all $m$-ary operations $g_1,\dots,g_n$ on $\Zpnakmin$, 
the following holds:
	$$ (f(g_1,\dots,g_n))^*=f^*(g_1^*,\dots,g_n^*).$$
\end{lemma}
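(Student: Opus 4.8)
The plan is to verify the identity pointwise by evaluating both sides on an arbitrary element of $M^m$ and unwinding the definition of the star operation. Every element of $M^m$ has the form $(l_1p,\dots,l_mp)$ with $l_1,\dots,l_m\in\Zpnakmin$, so it suffices to compare the two sides at such a point.

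First I would compute the left-hand side. By the definition of the star operation applied to the $m$-ary composite $f(g_1,\dots,g_n)$,
$$(f(g_1,\dots,g_n))^*(l_1p,\dots,l_mp)=f(g_1(l_1,\dots,l_m),\dots,g_n(l_1,\dots,l_m))\cdot p,$$
where the inner composition is carried out in $\Zpnakmin$.

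Next I would compute the right-hand side from the inside out. For each $i$, the definition of $g_i^*$ gives $g_i^*(l_1p,\dots,l_mp)=g_i(l_1,\dots,l_m)\cdot p$. The crucial point to check here is that this value is a genuine element of $M$ of the form $k_ip$ with $k_i=g_i(l_1,\dots,l_m)\in\Zpnakmin$; this is exactly what licenses applying the outer operation $f^*$ according to its defining equation. Doing so yields
$$f^*(g_1^*,\dots,g_n^*)(l_1p,\dots,l_mp)=f^*(k_1p,\dots,k_np)=f(k_1,\dots,k_n)\cdot p=f(g_1(l_1,\dots,l_m),\dots,g_n(l_1,\dots,l_m))\cdot p.$$

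Comparing the two expressions, both reduce to $f(g_1(l_1,\dots,l_m),\dots,g_n(l_1,\dots,l_m))\cdot p$, so the identity holds at every point of $M^m$ and hence as operations. I do not expect any real obstacle: the statement is a bookkeeping identity that follows directly from the definition of $*$, and the only step requiring a moment's care is confirming that each inner value $g_i^*(l_1p,\dots,l_mp)$ lies in $M$ and is presented in the form $k_ip$ needed to invoke the defining equation of $f^*$.
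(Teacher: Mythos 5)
Your proof is correct and follows exactly the same route as the paper: evaluate both sides at an arbitrary point $(l_1p,\dots,l_mp)\in M^m$ and unwind the definition of $*$ until both reduce to $f(g_1(l_1,\dots,l_m),\dots,g_n(l_1,\dots,l_m))\cdot p$. The extra remark that each inner value $g_i^*(l_1p,\dots,l_mp)$ is of the form $k_ip$ with $k_i\in\Zpnakmin$ is a sensible clarification, though the paper leaves it implicit.
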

\begin{proof}
For every $\bl=(l_1,\dots,l_m)\in (\Zpnakmin)^m$ we compute
$$(f(g_1,\dots,g_n))^*(\bl p)=f(g_1,\dots,g_n)(\bl)\cdot p=f(g_1(\bl ),\dots,g_n(\bl))\cdot p,$$
and
$$f^*(g_1^*(\bl p),\dots,g_n^*(\bl p))=f^*(g_1(\bl)\cdot p,\dots,g_n(\bl)\cdot p)=f(g_1(\bl),\dots,g_n(\bl))\cdot p.$$
We have the required equality.
\end{proof}

	Let $E_2(\Zpnakmin)$ denote the clone on the ring $\Zpnakmin$ generated by constants, addition  and the binary operation $pxy$.
We show that the interval between $\pol(M)$ and $\comp(M)$ and the interval between $E_2(\Zpnakmin)$ and $\comp(\Zpnakmin)$ are isomorphic.
	
Let $C$ be a clone between $E_2(\Zpnakmin)$ and $\comp(\Zpnakmin)$  and let $C^*$ denote the set $$C^*=\lbrace f^* \mid f\in C\rbrace.$$

\begin{lemma}
	For every clone $C$ between $E_2(\Zpnakmin)$ and $\comp(\Zpnakmin)$, $C^*$ is a clone between $\pol(M)$ and $\comp(M)$.
\end{lemma}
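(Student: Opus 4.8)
The plan is to verify the three defining conditions separately: that $C^*$ is a clone, that it contains $\pol(M)$, and that it is contained in $\comp(M)$.

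First I would check that $C^*$ is a clone. For the projections, note that if $f(\bl)=l_i$ is the $i$-th projection on $\Zpnakmin$ (which lies in $C$), then $f^*(l_1p,\dots,l_np)=l_ip$ is the $i$-th projection on $M$; hence $C^*$ contains all projections. Closure under composition is immediate from Lemma \ref{compo}: given $f,g_1,\dots,g_n\in C$, the composite $f(g_1,\dots,g_n)$ again lies in $C$ because $C$ is a clone, and its star equals $f^*(g_1^*,\dots,g_n^*)$, so this composite of starred operations belongs to $C^*$.

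Next I would establish the upper bound $C^*\subseteq\comp(M)$. This is a one-line consequence of Lemma \ref{compat}: every $f\in C$ lies in $\comp(\Zpnakmin)$ since $C\subseteq\comp(\Zpnakmin)$, and Lemma \ref{compat} then yields $f^*\in\comp(M)$, so every element of $C^*$ is compatible on $M$.

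The main work is the lower bound $\pol(M)\subseteq C^*$, which I would prove by exhibiting the generators of $\pol(M)$ — addition, multiplication, and the constants — as stars of operations in $C$. Since $E_2(\Zpnakmin)\subseteq C$, the addition, the constants, and the operation $g(x,y)=pxy$ on $\Zpnakmin$ all belong to $C$. For addition, the star of addition on $\Zpnakmin$ sends $(l_1p,l_2p)$ to $(l_1+l_2)p$, which is exactly addition on $M$ (any carry beyond $p^{k-1}$ contributes a multiple of $p^k$ and vanishes modulo $p^k$). For the constants, the star of the constant $c$ on $\Zpnakmin$ is the constant $cp$ on $M$, and as $c$ ranges over $\Zpnakmin$ this produces every constant of $M$. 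The delicate point, and the one I expect to be the crux, is multiplication: multiplication on $M$ sends $(l_1p,l_2p)$ to $l_1l_2p^2\pmod{p^k}$, and this is precisely $g^*(l_1p,l_2p)=(pl_1l_2)p\pmod{p^k}$, since $l_1l_2p^2\equiv(pl_1l_2)p\pmod{p^k}$. Thus multiplication on $M$ is the star of the generator $pxy$, which is exactly why $E_2(\Zpnakmin)$ is defined using $pxy$ rather than ordinary multiplication. Having all three generators in $C^*$, together with the closure of $C^*$ under composition already established, gives $\pol(M)\subseteq C^*$ and completes the proof.
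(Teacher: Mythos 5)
Your proposal is correct and follows essentially the same route as the paper: projections and closure under composition via Lemma \ref{compo}, the generators of $E_2(\Zpnakmin)$ mapping under $*$ to addition, constants and multiplication on $M$ (with the key computation $g^*(l_1p,l_2p)=(pl_1l_2)p\equiv l_1l_2p^2 \pmod{p^k}$ handled correctly, including the reduction modulo $p^{k-1}$), and the upper bound from compatibility. The only cosmetic difference is that you invoke Lemma \ref{compat} for $C^*\subseteq\comp(M)$ where the paper re-derives the congruence-preservation directly; both are valid.
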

\begin{proof}
1. If $f$ is a projection on $\Zpnakmin$, then $f^*$ is the same projection on $M$. By Lemma \ref{compo},
$C^*$ is closed under composition. Thus, $C^*$ is indeed a clone.

2. If $f$ is the addition on $\Zpnak$, then $f^*$ is the addition $\mo\ p^k$ on $M$. If $g(x,y)=pxy$ 
on $\Zpnakmin$, then $g^*(xp,yp)=g(x,y)\cdot p=(pxy)p=px\cdot py$ is the multiplication modulo $p^k$
on $M$. Further, every constant operation  $h(\bx)=c\in\Zpnakmin$ belongs to $C$, so $h^*(lp)=cp$ is a
constant operation on $M$. We have obtained that $C^*\supseteq\pol(M)$.

3. Every $f\in C$ preserves congruences modulo $p,\dots, p^{k-2}$. Consequently, then $f^*$ preserves 
congruences modulo $p^2,\dots,p^{k-1}$. The congruence 
$\mo\ p$ is trivial on $M$. Hence,
$C^*\subseteq\comp(M)$.
\end{proof}

Clearly, $f^*=g^*$ if and only if $f=g$. Hence, the assignment
$C\mapsto C^*$ is an order embedding. Now we show its 
surjectivity.
	
\begin{lemma}
For any clone $D$ between $\pol(M)$ and $\comp(M)$, the set 
$$C=\lbrace h\in\comp(\Zpnakmin)\mid h^*\in D\rbrace$$
 is a clone between $E_2(\Zpnakmin)$ and $\comp(\Zpnakmin)$. Moreover, $D=C^*$.
\end{lemma}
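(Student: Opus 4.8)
The plan is to verify that the proposed set $C=\lbrace h\in\comp(\Zpnakmin)\mid h^*\in D\rbrace$ is a clone, that it lies in the required interval, and finally that $C^*=D$, which is the surjectivity assertion that makes $C\mapsto C^*$ an isomorphism. The whole argument rests on the composition identity from Lemma \ref{compo}, namely $(f(g_1,\dots,g_n))^*=f^*(g_1^*,\dots,g_n^*)$, together with the correspondence in Lemma \ref{compat} between $\comp(\Zpnakmin)$ and $\comp(M)$.

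First I would check that $C$ is a clone. Projections $h(\bx)=x_i$ satisfy $h^*=x_i$ (the projection on $M$), which lies in $D$ since $D\supseteq\pol(M)$; hence $C$ contains all projections. For closure under composition, take $f,g_1,\dots,g_n\in C$, so that $f^*,g_1^*,\dots,g_n^*\in D$. By Lemma \ref{compo} we have $(f(g_1,\dots,g_n))^*=f^*(g_1^*,\dots,g_n^*)$, and since $D$ is closed under composition, this lies in $D$. Thus $f(g_1,\dots,g_n)\in C$, so $C$ is a clone. Next I would locate $C$ in the interval. The inclusion $C\subseteq\comp(\Zpnakmin)$ is immediate from the definition. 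For $E_2(\Zpnakmin)\subseteq C$, it suffices to check the generators: the computation already recorded in the previous lemma shows that addition maps under $*$ to addition $\mo\ p^k$ on $M$, that the operation $g(x,y)=pxy$ maps to multiplication on $M$, and that each constant maps to a constant on $M$; all of these lie in $\pol(M)\subseteq D$. Since the generators of $E_2(\Zpnakmin)$ land in $C$ and $C$ is a clone, we get $E_2(\Zpnakmin)\subseteq C$.

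The main point, and the step requiring the most care, is the equality $D=C^*$. The inclusion $C^*\subseteq D$ is direct: if $h\in C$ then $h^*\in D$ by definition, so $C^*\subseteq D$. For the reverse inclusion $D\subseteq C^*$, take any $d\in D$; since $D\subseteq\comp(M)$, Lemma \ref{compat} supplies a (unique) operation $h\in\comp(\Zpnakmin)$ with $h^*=d$. Here I would invoke the surjectivity part of Lemma \ref{compat}: every element of $\comp(M)$ arises as $f^*$ for some $f\in\comp(\Zpnakmin)$, because the map $lp\mapsto l$ identifies $M$ (with its congruences modulo $p^2,\dots,p^{k-1}$) with $\Zpnakmin$ (with its congruences modulo $p,\dots,p^{k-2}$), and $f\mapsto f^*$ is the induced bijection on operations. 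Then $h^*=d\in D$ means $h\in C$, so $d=h^*\in C^*$. Hence $D\subseteq C^*$, giving $D=C^*$.

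The obstacle to watch is the well-definedness and bijectivity underlying the last paragraph: one must be sure that the assignment $f\mapsto f^*$ is a bijection from all operations on $\Zpnakmin$ onto all operations on $M$, so that every $d\in\comp(M)$ genuinely has a preimage $h$ and that this $h$ is automatically compatible. This is exactly the content of Lemma \ref{compat} combined with the remark that $f^*=g^*$ forces $f=g$ (the map $lp\mapsto l$ is a bijection $M\to\Zpnakmin$). Once this is in hand, the two inclusions close up, and together with the order-embedding property already established from $f^*=g^*\iff f=g$, the assignment $C\mapsto C^*$ is an order isomorphism between the two intervals.
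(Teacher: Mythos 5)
Your proposal is correct and follows essentially the same route as the paper: verify the clone axioms via Lemma \ref{compo}, check that the generators of $E_2(\Zpnakmin)$ map into $D$, and obtain $D\subseteq C^*$ by using the bijection $lp\mapsto l$ to produce a preimage $h$ of each $d\in D$ and Lemma \ref{compat} to conclude $h\in\comp(\Zpnakmin)$. Your explicit attention to why every element of $\comp(M)$ has a compatible preimage is exactly the point the paper handles by defining $h$ through $f(\bx p)=h(\bx)\cdot p$.
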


\begin{proof}
It is clear that $C$ is a clone. (The closedness under composition follows from Lemma \ref{compo}.) Moreover, 
if $f$ is the addition on $\Zpnakmin$, then $f^*$ is the addition on $M$, which belongs to $D$, so $f\in C$. 
Similarly, if $g(x,y)=pxy$ on $\Zpnakmin$, then $g^*$ is the multiplication on $M$, so $g^*\in D$ and hence
$g\in C$. If $h$ is a constant operation on $\Zpnakmin$, then $h^*$ is a constant operation on $M$ and, again,
$h^*\in D$ implies $h\in C$. We have proved that $E_2(\Zpnakmin)\subseteq C$. By Lemma \ref{compat} we have 
$C\subseteq\comp(\Zpnakmin)$. 

It remains to prove that $D=C^*$. The inclusion $C^*\subseteq D$ is trivial. Conversely, let $f\in D$. Then
there is an operation  $h$ on $\Zpnakmin$ such that $f(\bx p)=h(\bx)\cdot p$. Clearly, $f=h^*$. Lemma \ref{compat} implies that $h\in\comp(\Zpnakmin)$, so $h\in C$ and $f\in C^*$.
\end{proof}

As a consequence of previous lemmas we state the following theorem. 
\begin{theorem}\label{main}
The lattice $I(p^k)$ is isomorphic to the interval
between clones $E_2(\Zpnakmin)$ and $\comp(\Zpnakmin)$.
\end{theorem}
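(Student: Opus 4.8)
The plan is to obtain the isomorphism by composing two order-isomorphisms that are already available from the preceding lemmas, using the interval of clones on $M$ between $\pol(M)$ and $\comp(M)$ as a bridge. The first of these is Theorem \ref{first}, which asserts that $C\mapsto C_M$ is an isomorphism of $I(p^k)$ onto the interval between $\pol(M)$ and $\comp(M)$; this accounts for the passage from $\Zpnak$ down to the set $M$ of multiples of $p$.

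Next I would assemble the maps $C\mapsto C^*$ into an isomorphism between the interval $[E_2(\Zpnakmin),\comp(\Zpnakmin)]$ and the interval $[\pol(M),\comp(M)]$. By the lemma immediately preceding the theorem, $C^*$ is always a clone lying between $\pol(M)$ and $\comp(M)$ whenever $C$ lies between $E_2(\Zpnakmin)$ and $\comp(\Zpnakmin)$, so the assignment is well defined; since $f^*=g^*$ forces $f=g$, it is injective, and it visibly respects inclusion in both directions, hence is an order embedding. Surjectivity is exactly the content of the final lemma: for any $D$ in $[\pol(M),\comp(M)]$, the set $\{h\in\comp(\Zpnakmin)\mid h^*\in D\}$ lies in $[E_2(\Zpnakmin),\comp(\Zpnakmin)]$ and has image $D$ under $*$. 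Since a surjective order embedding between lattices is automatically a lattice isomorphism, $C\mapsto C^*$ gives $[E_2(\Zpnakmin),\comp(\Zpnakmin)]\cong[\pol(M),\comp(M)]$.

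Composing the two isomorphisms yields $I(p^k)\cong[E_2(\Zpnakmin),\comp(\Zpnakmin)]$, which is the assertion. The substantive mathematical work has already been spent on the lemmas, above all on the surjectivity of $C\mapsto C^*$, which rests on the inverse construction together with the compatibility correspondence of Lemma \ref{compat} relating $\comp(\Zpnakmin)$ and $\comp(M)$ through the operation $f^*$. At the level of the theorem itself the only remaining point is the formal fact that a bijective order embedding is a lattice isomorphism, so no further computation is required.
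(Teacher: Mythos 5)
Your proposal is correct and follows exactly the route the paper intends: Theorem \ref{main} is stated there ``as a consequence of previous lemmas,'' namely by composing the isomorphism $C\mapsto C_M$ of Theorem \ref{first} with the surjective order embedding $C\mapsto C^*$ established in the two lemmas immediately preceding it. Your observation that a bijective order embedding between lattices is automatically a lattice isomorphism is the same (implicit) final step the paper relies on.
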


The isomorphism in our Theorem maps a clone $K$ between $E_2(\Zpnakmin)$ and $\comp(\Zpnakmin)$ first into
the clone $K^*$ and then, by Theorem \ref{first}, into $C(K^*)$. We also have a correspondence
between generators. If $\{f_i\mid i\in I\}$ is a generating set of $K$, then (by Lemma \ref{compo})
$\{f_i^*\mid i\in I\}$ is a generating set for $K^*$. The generating set of $C(K^*)$ is then described by
Lemma \ref{geners}.

\section{Cases $\MakeLowercase{k}=2$ and $\MakeLowercase{k}=3$}

The interval between $E_2(\Zpnakmin)$ and $\comp(\Zpnakmin)$ is known for $k=2$ and $k=3$. We can use this knowledge
to describe all clones between $P(\Zpnak)$ and $\comp(\Zpnak)$.

If $k=2$, then the operation $g(x,y)=pxy$ on ${\mathbb Z}_p$ is trivially zero. So, $E_2(\Zp)$ is the clone of all 
polynomials of the group $(\Zp,+)$. It is well known that this clone is maximal, which means that it is covered
by the clone of all operations on the set ${\mathbb Z}_p$. (It can be deduced from the well known 
Rosenberg's classification 
in \cite{RO}.) The clone of all operations coincides with $\comp(\Zp)$, since the ring $\Zp$ has only trivial
congruences and therefore all operations are compatible. We obtain the following result.

\begin{theorem}\label{zpdva}
The interval between $\pol(\Zpdva)$ and $\comp(\Zpdva)$ has only two elements. 
\end{theorem}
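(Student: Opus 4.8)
The plan is to deduce the statement from the reduction theorem by specializing it to $k=2$ and then to invoke the maximality of the affine clone. By Theorem~\ref{main}, the lattice $I(p^2)$ is isomorphic to the interval between $E_2(\Zp)$ and $\comp(\Zp)$ in the lattice of clones on $\Zp$, so it suffices to show that this interval is a two-element chain. The first step is to identify its endpoints concretely. On $\Zp$ the generator $g(x,y)=pxy$ of $E_2$ is identically $0$, since $p\equiv 0\ (\mo\ p)$; hence $E_2(\Zp)$ is generated by addition and constants alone and therefore coincides with the clone of all polynomials of the additive group $(\Zp,+)$, i.e.\ the affine operations $f(\bx)=a_0+\sum_i a_i x_i$ with $a_i\in\Zp$. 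For the upper endpoint, the field $\Zp$ has only the two trivial ring congruences, so every operation on $\Zp$ is congruence preserving and $\comp(\Zp)$ equals the clone of all finitary operations on the $p$-element set.

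The crux is then the assertion that the affine clone $E_2(\Zp)$ is a \emph{maximal} clone (it is proper, since multiplication is not affine), so that the only clones between it and the clone of all operations are these two. Here I would invoke Rosenberg's classification of maximal clones on a finite set~\cite{RO}. The affine operations just described are precisely the operations preserving the quaternary relation $\{(a,b,c,d)\mid a-b+c-d=0\}$ attached to the elementary abelian $p$-group $(\Zp,+)$, equivalently the operations commuting with its Mal'cev term $x-y+z$; clones of this affine type form one of the classes in Rosenberg's list and are maximal. Granting this, the interval $[E_2(\Zp),\comp(\Zp)]$ collapses to the chain $E_2(\Zp)\subsetneq\comp(\Zp)$ of length one, and transporting it across the isomorphism of Theorem~\ref{main} shows that $I(p^2)$, the interval between $\pol(\Zpdva)$ and $\comp(\Zpdva)$, has exactly two elements.

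The single genuine obstacle is the maximality of the affine clone; everything else is routine bookkeeping flowing from the already-established Theorem~\ref{main} and the field structure of $\Zp$. Should one prefer a self-contained argument to citing the classification, the alternative is a direct proof that any one non-affine operation together with the affine maps generates every operation on $\Zp$. Since $\Zp$ is a field, all operations are polynomial, so this reduces to expressing multiplication in terms of the affine operations and an arbitrary non-affine function, which can be carried out by a second-order finite-difference computation; this is elementary but more laborious than appealing to \cite{RO}, which is why I would use the classification.
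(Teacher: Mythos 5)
Your proposal is correct and follows essentially the same route as the paper: specialize Theorem~\ref{main} to $k=2$, observe that $pxy$ vanishes on $\Zp$ so that $E_2(\Zp)$ is the affine clone of the group $(\Zp,+)$, note that $\comp(\Zp)$ is the clone of all operations since $\Zp$ has only trivial congruences, and conclude by the maximality of the affine clone via Rosenberg's classification~\cite{RO}. The only difference is cosmetic (your explicit remarks on properness and the possible finite-difference alternative), so nothing further is needed.
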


The case $k=3$ is much more complicated. The interval between 
 $E_2$ and $\comp(\Zpdva)$ is only known from Bulatov's paper \cite{B}. (We write $E_2$ instead of 
 $E_2(\Zpdva)$.) In fact, Bulatov described all clones, which contain polynomials of the group $(\Zpdva,+)$. The lattice of these clones is depicted below. Each clone is
determined by a set of generators, which always contains the addition and the constants. Notice that the picture
includes the fact stated in Theorem \ref{zpdva}.
\begin{theorem} The lattice of clones between $P(\Zptri)$ and $\comp(\Zptri)$ is isomorphic to the interval
$E_2$ and $\comp(\Zpdva)$ on the picture below.
\end{theorem}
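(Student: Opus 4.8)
The plan is to deduce the statement directly from the reduction theorem together with Bulatov's classification, so that essentially no new computation is required. Applying Theorem \ref{main} with $k=3$ gives an isomorphism between $I(p^3)$ and the interval $[E_2(\Zpdva),\comp(\Zpdva)]$ in the lattice of clones on $\Zpdva$; this is exactly the interval named in the statement. It therefore remains only to make this interval explicit, which is the content of Bulatov's paper \cite{B}.

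To connect the two, I would first check that the interval $[E_2,\comp(\Zpdva)]$ lies inside the range of clones that Bulatov actually classifies. Every clone above $E_2$ contains all polynomials of the additive group $(\Zpdva,+)$: indeed $E_2$ is generated by addition, constants and $pxy$, and from addition and constants alone one already produces every affine operation $\sum c_ix_i+d$, writing each $c_ix_i$ as an iterated sum. Hence the whole interval is contained in the lattice of clones containing the group polynomials, which is precisely what Bulatov describes. Reading off from his lattice the part lying between $E_2$ and $\comp(\Zpdva)$ yields the picture displayed below, and by the isomorphism of Theorem \ref{main} this picture is a faithful diagram of $I(p^3)$.

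To make each node of the picture into a concrete clone of compatible operations on $\Zptri$, I would transport Bulatov's generators through the isomorphism using the remark following Theorem \ref{main}. A generating family $\{f_i\}$ of a clone $K$ in the interval passes to $\{f_i^*\}$ generating $K^*$, and then, by Lemma \ref{geners}, to the operations on $\Zptri$ agreeing with $f_i^*$ on $M^n$ and vanishing off $M^n$, adjoined to addition and constants.

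The main difficulty is not in our reduction, which is complete once Theorem \ref{main} is invoked, but in the transcription: one must match Bulatov's formalism and generating sets to the interval $[E_2,\comp(\Zpdva)]$, verify that exactly the clones lying above $E_2$ are retained, and confirm that the resulting diagram---including the two-element interval $[\pol(\Zpdva),\comp(\Zpdva)]$ recorded in Theorem \ref{zpdva}---faithfully reproduces the relevant part of his classification.
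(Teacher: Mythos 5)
Your proposal is correct and follows essentially the same route as the paper, which states this theorem without a separate proof as an immediate consequence of Theorem \ref{main} (with $k=3$) combined with Bulatov's classification of the clones on $\Zpdva$ containing the group polynomials, transporting generators via Lemma \ref{geners}. Your additional check that every clone above $E_2$ contains all polynomials of $(\Zpdva,+)$ is a sensible explicit verification of a fact the paper only asserts in the introduction.
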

\unitlength=0,75mm
\begin{picture}(120,120)

\put(45,0){\circle*{2}}
\put(45,15){\circle*{2}}
\put(45,45){\circle*{2}}
\put(45,30){\circle*{2}}
\put(45,60){\circle*{2}}
\put(65,37){\circle*{2}}
\put(65,52){\circle*{2}}
\put(65,67){\circle*{2}}
\put(85,74){\circle*{2}}
\put(105,81){\circle*{2}}
\put(65,83){\circle*{2}}
\put(85,89){\circle*{2}}
\put(105,96){\circle*{2}}
\put(105,111){\circle*{2}}

\put(45,0){\line(0,1){19}}
\put(44.5,20){$\vdots$}
\put(45,26){\line(0,1){4}}
\put(45,30){\line(0,1){19}}
\put(44.5,50){\vdots}
\put(45,56){\line(0,1){4}}
\put(45,30){\line(3,1){20}}
\put(45,45){\line(3,1){20}}
\put(45,60){\line(3,1){20}}
\put(65,37){\line(0,1){19}}
\put(64,57){\vdots}
\put(65,63){\line(0,1){19}}
\put(65,67){\line(3,1){40}}
\put(65,83){\line(3,1){40}}
\put(85,74){\line(0,1){15}}
\put(105,81){\line(0,1){30}}

\put(49,0){$\pol_{grp}(\Zpdva,+)$}
\put(49,13){$E_2$}
\put(35,30){$E_p$}
\put(30,45){$E_{p+1}$}
\put(38,60){$E$}
\put(69,35){$N_p$}
\put(69,50){$N_{p+1}$}
\put(69,62){$N$}
\put(55,82){$F_1$}
\put(70,93){$\pol(\Zpdva)$}
\put(85,68){$F_2$}
\put(108,78){$F_3$}
\put(108,94){$\comp(\Zpdva)$}
\put(108,109){$O(\Zpdva)$}
\end{picture}
\vskip5mm

Now we list the generators of all clones $K$ between $E_2$ and $\comp(\Zpdva)$
(taken from \cite{B}), as well as the generators of the corresponding clones\linebreak $\Phi(K)=C(K^*)$ between
$P(\Zptri)$ and $\comp(\Zptri)$. The generators of $\Phi(K)$ are constructed by the process described at the 
end of the previous section.

The definitions are as follows. The operation $h$  on $\Zpdva$ is defined by the formula
$$ h(x,y)= \left\{ \begin{array}{l@{\quad}c}
klp, \hbox{ if }  x=kp, y=lp\ \hbox{for some}\ k,l\in\{0,\dots,p-1\}\\
0, \hbox{ otherwise. }\\ \end{array} \right.$$
The $j$-ary operation $\xi_j$ on $\Zptri$ is defined by
$$ \xi_j(\bx)= \left\{ \begin{array}{l@{\quad}c}
k_1\dots k_jp^2, \hbox{ if }  \bx=\bk p\ \hbox{for some}\ \bk\in\{0,\dots,p^2-1\}^j\\
0, \hbox{ otherwise. }\\ \end{array} \right.$$
Notice that $\xi_2$ is the restriction of the usual multiplication to $M$. It is a polynomial of the ring $\Zptri$,
so $\Phi(E_2)=\pol(\Zptri)$.

Next we define operations $\pi$, $\psi$, $\rho$, $\varphi$ and $\tau$ on $\Zptri$.

$$ \pi(x)= \left\{ \begin{array}{l@{\quad}c}
pk^p, \hbox{ if }  x=k p\ \hbox{for}\ k\in\{0,\dots,p^2-1\}\\
0, \hbox{ otherwise. }\\ \end{array} \right.$$

$$ \psi(x,y)= \left\{ \begin{array}{l@{\quad}c}
pk^pl^p, \hbox{ if }  x=k p, y=lp\ \hbox{for}\ k,l\in\{0,\dots,p^2-1\}\\
0, \hbox{ otherwise. }\\ \end{array} \right.$$

$$ \rho(x,y)= \left\{ \begin{array}{l@{\quad}c}
pk^p(l^p-l), \hbox{ if }  x=k p, y=lp\ \hbox{for}\ k,l\in\{0,\dots,p^2-1\}\\
0, \hbox{ otherwise. }\\ \end{array} \right.$$

$$ \varphi(x,y)= \left\{ \begin{array}{l@{\quad}c}
klp^2, \hbox{ if }  x=k p^2, y=lp^2\ \hbox{for}\ k,l\in\{0,\dots,p-1\}\\
0, \hbox{ otherwise. }\\ \end{array} \right.$$

$$ \tau(x,y)= \left\{ \begin{array}{l@{\quad}c}
klp, \hbox{ if }  x=k p, y=lp\ \hbox{for}\ k,l\in\{0,\dots,p^2-1\}\\
0, \hbox{ otherwise. }\\ \end{array} \right.$$

The generators of all clones are in the following table.
We only list the additional generators (besides addition and constants for $K$,
besides addition, multiplication and constants for $\Phi(K)$). The clones $E$ and $N$ are not in the table,
they are the union of all $E_j$ and $N_j$, respectively.
\vskip5mm

\begin{center}
\begin{tabular}{ |c|c|c| } 
\hline
 $K$ & generators of $K$ & generators of $\Phi(K)$\\
\hline
\hline
$E_j$ & $px_1\dots x_j$ & $\xi_j$\\ 
\hline
$N_j$ & $px_1\dots x_j$, $x^p$ & $\xi_j$, $\pi$\\ 
\hline
$F_1$ & $x^py^p$ & $\psi$\\
\hline
$F_2$ & $x^p(y^p-y)$ & $\rho$\\
\hline
$F_3$ & $h$ & $\varphi$\\
\hline
$\pol(\Zpdva)$ & $xy$ & $\tau$\\
\hline
$\comp(\Zpdva)$ & $xy$, $h$ & $\tau$, $\varphi$\\
\hline
\end{tabular}
\end{center}

\section{Acknowledgements}
This work has been supported by Slovak VEGA grant 1/0097/18.

\vskip5mm

\end{document}